\documentclass[11pt]{article}
\usepackage[utf8]{inputenc}
\pdfoutput=1
\usepackage{amsfonts,amsmath}
\usepackage{amssymb}
\usepackage{url}
\usepackage{amsthm,amsmath,amscd}
\usepackage{enumerate}
\usepackage{lipsum}
\usepackage[margin=25pt,font=small,labelfont=bf]{caption}
\usepackage{hyperref}
\usepackage[pdftex]{graphicx}
\usepackage{graphicx}
\usepackage[square,numbers,sort&compress]{natbib}

\usepackage[margin=1in]{geometry}
\textheight     9in
\textwidth       6.5in

\newcommand{\RR}{\mathbb R}
\newcommand{\PP}{\mathbb P}
\newcommand{\EE}{\mathbb E}

\newcommand{\bna}{\begin{eqnarray}}
\newcommand{\ena}{\end{eqnarray}}
\newcommand{\ba}{\begin{eqnarray*}}
\newcommand{\ea}{\end{eqnarray*}}
\newcommand{\bs}[1]{}
\newtheorem{theorem}{Theorem}[section]
\newtheorem{lemma}[theorem]{Lemma}
\newtheorem{proposition}[theorem]{Proposition}
\newtheorem{corollary}[theorem]{Corollary}

\newtheorem{remark}[theorem]{Remark}
\newtheorem{definition}[theorem]{Definition}

\newcommand{\QQ}{\mathbb Q}
\def\p{{\bf p}}

\def\x{{\bf x}}
\def\t{{\bf t}}
\def\l{{\bf l}}
\def\I{{\bf I}}

\def\y{{\bf y}}
\def\e{{\bf e}}
\def\0{{\bf 0}}

\def\v{{\bf v}}
\def\Q{{\bf Q}}
\newcommand{\A}{{\bf A}}
\newcommand{\QQQ}{{\cal Q}}
\newcommand{\SSS}{{\cal S}}

\def\q{{\bf q}}

\begin{document}
\title{Affine Rigidity and Conics at Infinity}

\author{Robert Connelly
\and
Steven J. Gortler
\and
Louis Theran}
\date{}

\maketitle

\begin{abstract}
We prove that if a framework of a graph is neighborhood affine rigid
in $d$-dimensions (or has the stronger property of having
an equilibrium stress matrix of rank
$n-d-1$)
then it has an affine flex
(an affine, but non Euclidean, transform of space that preserves
all of the edge lengths)
if and only if the framework is ruled on a single quadric.
This strengthens
and also simplifies a related result by Alfakih.
It also allows us to prove that the property of super stability is
invariant with respect to projective transforms and also to the coning
and slicing operations.
Finally this allows us to unify some previous results on the
Strong Arnold Property of matrices.
\end{abstract}

\section{Introduction}

Let $G$ be a connected graph with $n$ labeled vertices and $m$ edges
and $\p$ be a configuration of the $n$ vertices in $\EE^d$.
Throughout, we will assume that $\p$ has a full $d$ dimensional affine span.
The pair
$(G,\p)$ is called a framework in $\EE^d$.
Two frameworks $(G,\p)$ and $(G,\q)$ are called (Euclidean) equivalent if they
share
the same $m$ lengths measured along the
edges in $G$.
Two frameworks $(G,\p)$ and $(G,\q)$ are called (Euclidean) congruent if they
are
related through a $d$-dimensional
Euclidean transform.

In rigidity theory, one
may be interested in knowing if there is a second framework $(G,\q)$ in
$\EE^d$ that is equivalent to, but not congruent to $(G,\p)$.
If there is no such $(G,\q)$ in $\EE^d$,
we say that $(G,\p)$ is \emph{globally rigid} in $\EE^d$.
If there is no such $(G,\q)$ in
\emph{any} dimension, we say that $(G,\p)$ is \emph{universally rigid}.

\begin{figure}[h]
\begin{center}
\includegraphics[width=.3\textwidth]{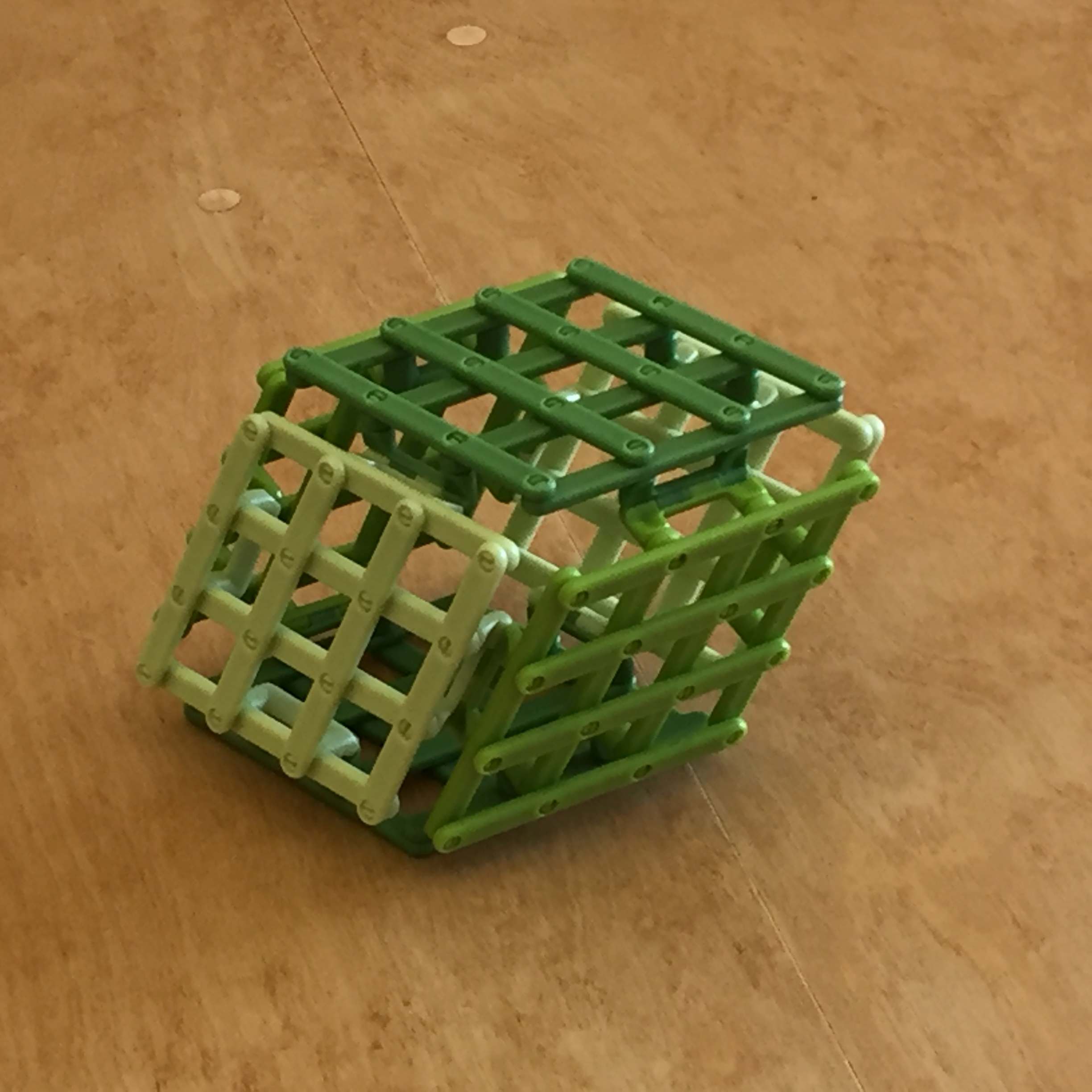}%
\hspace{1 in}
\includegraphics[width=.3\textwidth]{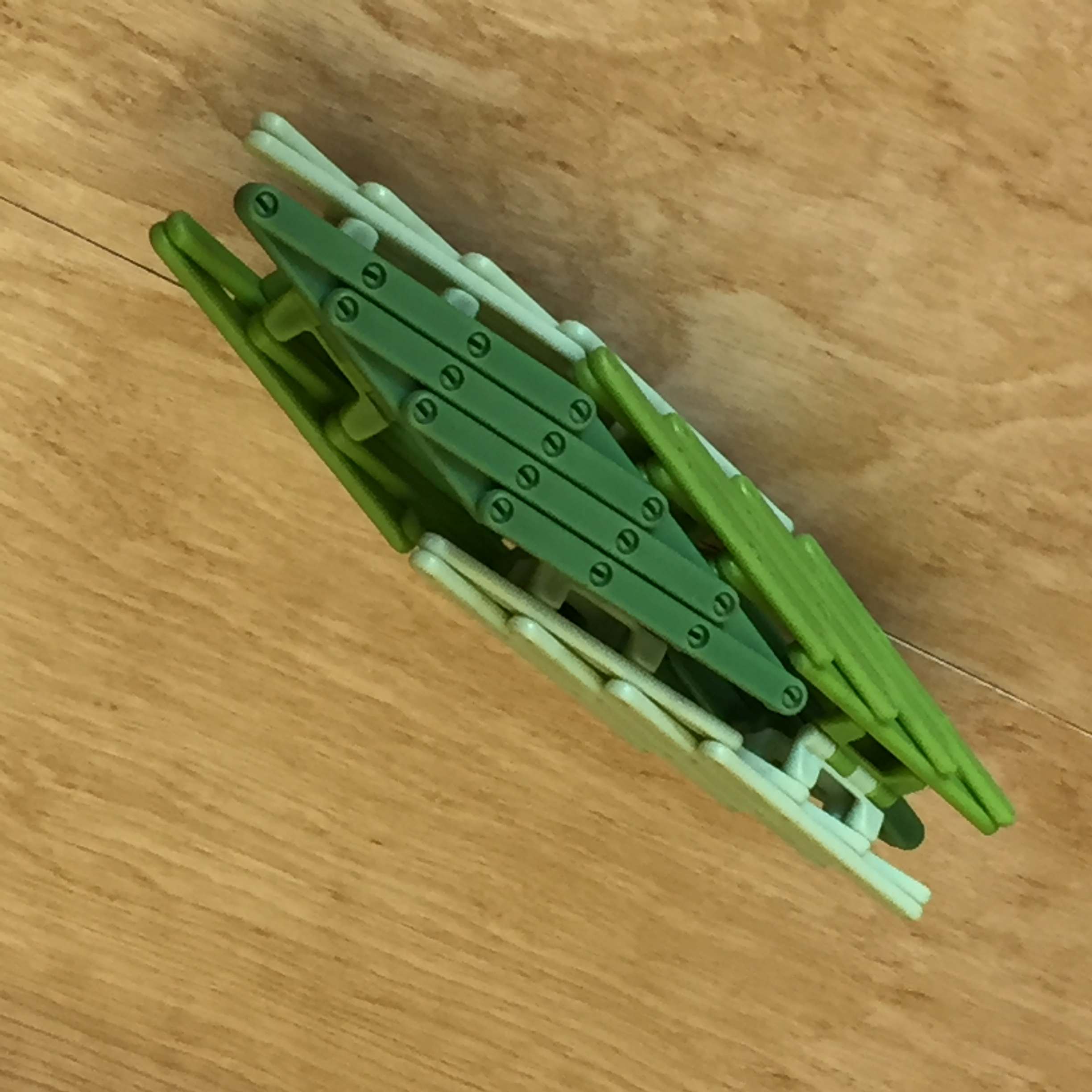}%
\end{center}
\caption{
Two Euclidean equivalent
``frameworks'' related by
an affine flex. The frameworks are not ruled, and they are not
neigborhood affine rigid.
}
\label{fig:pic}
\end{figure}

When trying to establish  global or universal rigidity, we can
often use
a certificate, called an ``equilibrium stress matrix'',
to rule out the existence of any equivalent framework
to $(G,\p)$ \emph{except for those that arise through
$d$-dimensional affine transforms}~\cite{Connelly-energy,Connelly-global}.
Then an extra argument is needed to establish that any affine transform
that preserves the $m$ lengths of $(G,\p)$ must actually be a Euclidean
transform. This last step is equivalent to proving that the edge directions
of $(G,\p)$ do not lie on a ``conic at infinity''.

Figure~\ref{fig:pic} shows a case
where there exists an affine motion applied to a framework
that does preserve all of the ``edge'' lengths.

We will use the following conventions.
Differences between pairs of points in $\EE^d$ give vectors in
a $d$-dimensional linear space, which we will identify with
$\RR^d$. Then after fixing an origin point in $\EE^d$ we will also identify
each point with its affine coordinates in $\RR^d$.

\begin{definition}
Let $(G,\p)$ be a framework in $\EE^d$.
For any edge $\{ij\}$ in $G$, let its edge vector in $\RR^d$
be
$\e_{ij}:= \p_j - \p_i$.
We say that the \textit{edge directions
of $(G,\p)$ lie on a conic at infinity} of $\EE^d$
if there exists
a non-zero symmetric $d\times d$ matrix $\Q$ such that for all of the
edge vectors, we have $\e_{ij}^t \Q \e_{ij}=0$.

(This property does not depend on the choice affine coordinates for
$\EE^d$.)

\end{definition}
In $\EE^2$, the conic condition
means that all of the edges are in, at most, two directions.
\begin{remark}
\label{rem:term}
The terminology ``conic at infinity'' comes from the following
projective setup.  Identify $\EE^d$ with the affine patch of
$\PP^d$ that has coordinates $(\cdots : 1)$. For computations,
we give a point $\x\in \EE^d$ homogeneous coordinates $\hat \x$,
which is the vector in $\RR^{d+1}$ obtained by appending a $1$
to $\x$. With this choice, difference vectors,
such as the edge vectors
$\e_{ij}$, have homogeneous coordinates (not identically zero)
of the form
$(\cdots  : 0)$, so they correspond to the point at infinity on the line
through $\p_i$ and $\p_j$.
In this view, the edge directions of $(G,\p)$
are on a conic at infinity if the $\e_{ij}$ are contained
in a quadric that lies in the hyperplane at infinity.
\end{remark}

\begin{definition}\label{def:affine-motion}
Let $A$ be an
affine map $A$ on $\EE^d$, and
define $A(\p)$ by $A(\p)_i := A(\p_i)$.
An \textit{affine
flex} of  a framework $(G,\p)$ in $\EE^d$ is an
an affine map $A$ such that $(G,A(\p))$ is
equivalent to $(G,\p)$.  An affine flex is non-trivial
if $A$ is not a Euclidean motion.
\end{definition}
The importance of conics at infinity comes from their close
connection to affine flexes of frameworks.
\begin{proposition}[\cite{Connelly-rigidity,Connelly-global}]\label{prop:no-conic-means-gr}
The edge directions of $(G,\p)$ lie on a conic at infinity
if and only if there is a non-trivial affine flex of $(G,\p)$.
\end{proposition}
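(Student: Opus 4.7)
The plan is to recast the existence of an affine flex as a condition on a single symmetric matrix, and then to construct the matrix directly from a conic at infinity.

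An affine map on $\EE^d$ can be written as $A(\x)=T\x+b$ for some linear $T$ and translation $b$. Since edge lengths depend only on differences $A(\p_j)-A(\p_i)=T\e_{ij}$, the translation $b$ is irrelevant. The condition that $(G,A(\p))$ is equivalent to $(G,\p)$ then amounts to $\e_{ij}^t T^t T \e_{ij} = \e_{ij}^t \e_{ij}$ for every edge, i.e.,
\[
\e_{ij}^t (T^t T - I)\,\e_{ij} = 0 \qquad \text{for every edge } \{ij\}.
\]
Set $\Q := T^t T - I$. This is a symmetric $d\times d$ matrix, and $A$ is a Euclidean motion if and only if $T$ is orthogonal, if and only if $\Q=0$. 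This gives the forward direction immediately: a non-trivial affine flex $A$ produces a non-zero symmetric $\Q$ witnessing that the edge directions lie on a conic at infinity.

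For the reverse direction, suppose $\Q\ne 0$ is symmetric and $\e_{ij}^t \Q \e_{ij}=0$ for all edges. I would like to produce a linear map $T$ with $T^t T = I + \lambda \Q$ for some small scalar $\lambda$. Since $I$ is positive definite and positive definiteness is open, for sufficiently small $\lambda>0$ the matrix $I+\lambda \Q$ is symmetric positive definite; hence it has a (symmetric, positive definite) square root $T := (I+\lambda \Q)^{1/2}$. By construction $T^t T - I = \lambda \Q \ne 0$, so $A(\x):=T\x$ is an affine map that is not a Euclidean motion, and by the displayed identity $\e_{ij}^t(T^tT-I)\e_{ij}=\lambda\,\e_{ij}^t\Q\e_{ij}=0$ for every edge, giving a non-trivial affine flex.

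The two directions are symmetric in content, and neither step looks deep; the only thing to be slightly careful about is the positivity needed to extract a real square root, which is handled simply by shrinking $\lambda$. The main conceptual point, rather than any hard computation, is recognizing that the symmetric matrix $T^tT - I$ is exactly the object the definition of a conic at infinity is asking about, which reduces the proposition to linear algebra on a single $d\times d$ matrix.
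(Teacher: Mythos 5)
Your proof is correct and is essentially the standard argument from the cited sources (the paper itself states this proposition with references and no proof): the correspondence $\Q = T^tT - I$ for the forward direction, and the square root of $I+\lambda\Q$ for small $\lambda>0$ in the reverse direction, is exactly how Connelly proves it. The one point worth noting explicitly, which you handle implicitly, is that even a singular linear part $T$ yields a non-zero $\Q$ (since $T^tT-I$ then has $-1$ as an eigenvalue), so the forward direction covers all non-Euclidean affine maps.
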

\begin{remark}\label{rem:affine-local-global}
Connelly \cite{Shaping-space} has shown that if
$(G,\p)$ has a non-trivial affine flex $A$, then there is a
continuous path  $A_t$ of non-trivial affine flexes.  Thus,
Proposition \ref{prop:no-conic-means-gr} implies that
if the edges of $(G,\p)$ lie on a conic at infinity, then
$(G,\p)$ is not even ``locally rigid'' (see \cite{Gortler-affine-rigidity} for detailed definitions).
\end{remark}

\begin{figure}[h]
\begin{center}
\includegraphics[width=.2\textwidth]{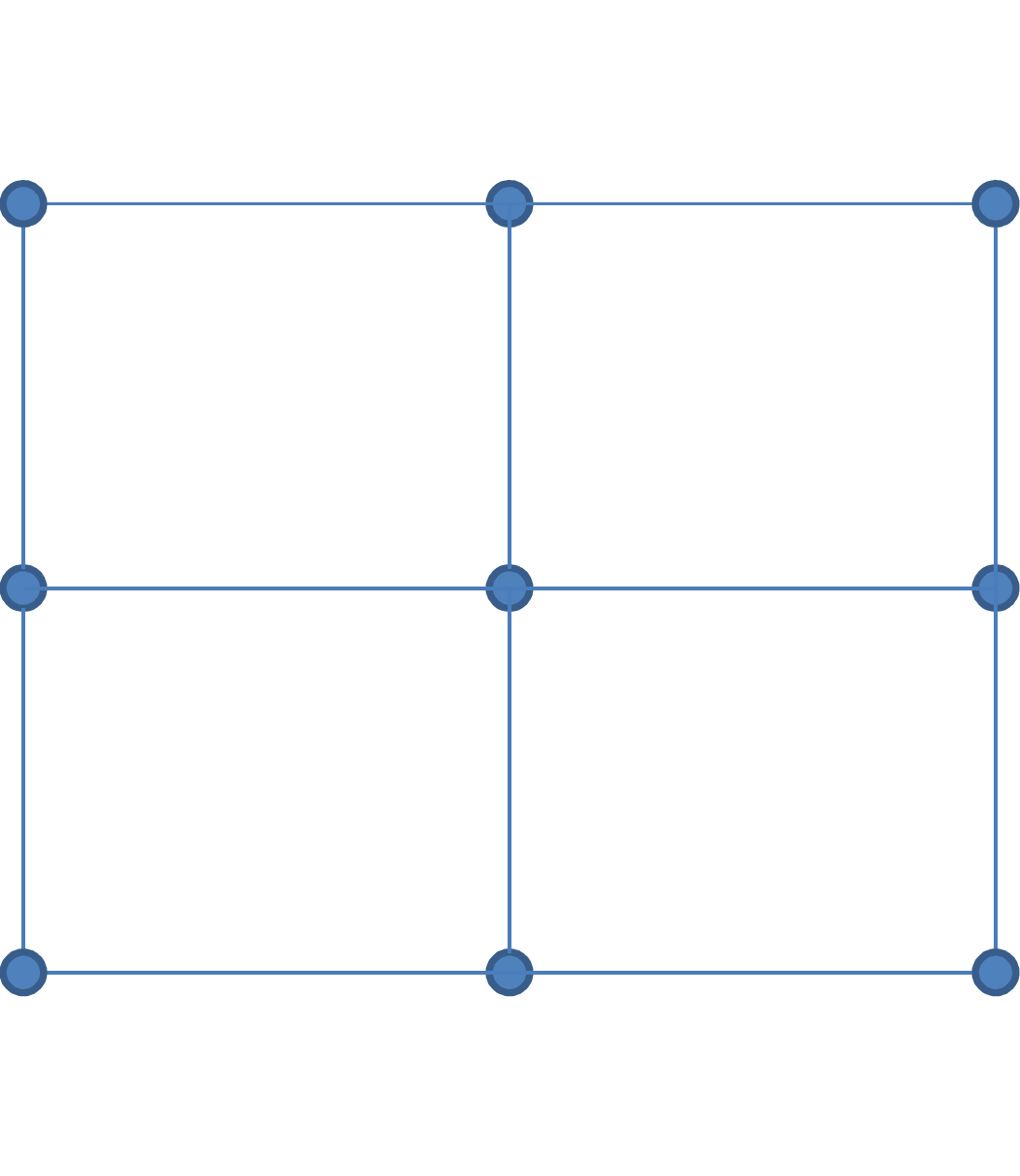}%
\hspace{1 in}
\includegraphics[width=.28\textwidth]{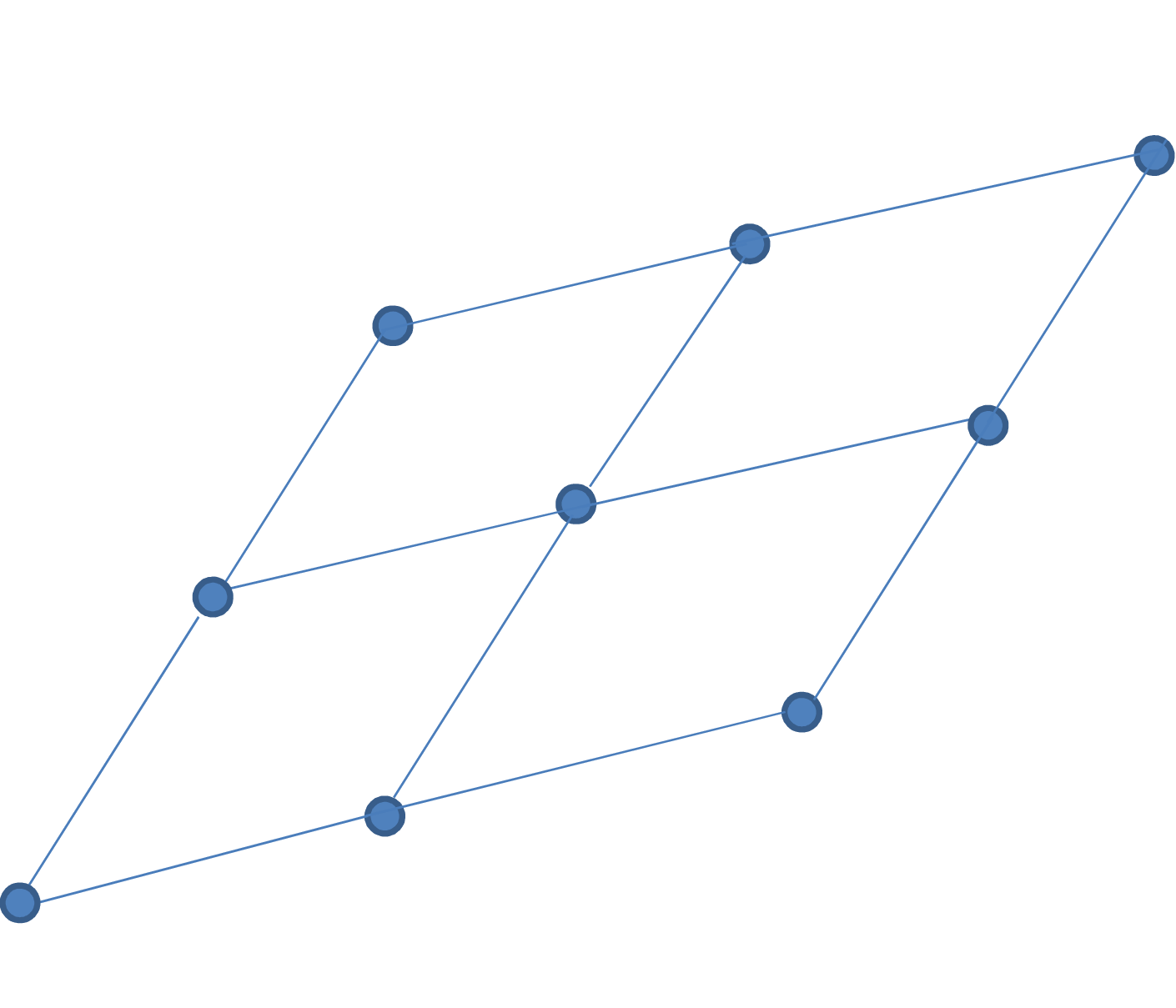}%
\end{center}
\caption{(Left) A framework $(G,\p)$ with its edge
directions at a conic at infinity.
(Right) A Euclidean equivalent
framework  obtained from $(G,\p)$ through a single
affine (but non Euclidean) transformation.
}
\label{fig:gate}
\end{figure}

Figure~\ref{fig:gate} shows a framework in $\EE^2$
with its edge directions
on a conic at infinity. A Euclidean equivalent framework on the right is obtained
obtained from $(G,\p)$ through a single
affine (but non Euclidean) transformation.

For any specific framework $(G,\p)$, we can efficiently
test whether the edge directions are on a conic at infinity
by solving a linear system. However, the usual
application of Proposition \ref{prop:no-conic-means-gr}
is to classes of frameworks.  This motivates the
question of when $(G,\p)$ does not, a priori, have
its edge directions on a conic at infinity.

A fundamental result along these lines is due to Connelly~\cite{Connelly-global}:
\begin{theorem}[\cite{Connelly-global}]\label{thm:generic-noconic}
Let  $(G,\p)$  be a framework in $\EE^d$.
Suppose that each vertex of $G$ has degree at least $d$.
Furthermore suppose that $\p$ is a generic configuration.
Then the edge directions of $(G,\p)$ do not lie
on a conic at infinity of $\EE^d$.
\end{theorem}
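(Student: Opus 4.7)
The strategy is to recast ``edges lie on a conic at infinity'' as a rank condition, observe that the obstruction is Zariski-closed, and then reduce to exhibiting one favorable configuration, which I will construct using the minimum-degree hypothesis.

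Observe that the edges of $(G,\p)$ lie on a conic at infinity iff the linear system $\e_{ij}^t \Q \e_{ij} = 0$, $\{i,j\}\in E(G)$, on the $\binom{d+1}{2}$-dimensional space $\mathrm{Sym}^2(\RR^d)$ of symmetric $d\times d$ matrices admits a nonzero solution $\Q$. Letting $M(\p)$ denote the $m\times\binom{d+1}{2}$ coefficient matrix whose row for $\{i,j\}$ encodes the functional $\Q\mapsto\e_{ij}^t \Q\e_{ij}$, the condition becomes $\mathrm{rank}\,M(\p)<\binom{d+1}{2}$. The entries of $M(\p)$ are polynomials in the coordinates of $\p$, so rank-deficiency is the simultaneous vanishing of all $\binom{d+1}{2}$-minors of $M(\p)$---a Zariski-closed condition on $(\EE^d)^n$. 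A generic $\p$ avoids this locus iff the locus is a proper subset of $(\EE^d)^n$, equivalently iff some particular $\p^*$ satisfies $\mathrm{rank}\,M(\p^*) = \binom{d+1}{2}$. The rest of the proof constructs such a $\p^*$.

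For the base case $n = d+1$, the min-degree hypothesis forces $G = K_{d+1}$. Placing $\p^*$ as any simplex and taking $\v_i := \p_i^* - \p_0^*$ as a basis of $\RR^d$, the $d$ edge equations at $\p_0^*$ give $\v_i^t \Q \v_i = 0$, killing the diagonal of $\Q$ in the $\v_i$-basis; the remaining $\binom{d}{2}$ equations $(\v_i-\v_j)^t \Q(\v_i-\v_j) = 0$ then polarize to $\v_i^t \Q \v_j = 0$, killing the off-diagonal entries, so $\Q = 0$. For $n > d+1$, I would add the vertices of $G$ one at a time in any ordering $v_1,\dots,v_n$, placing each $\p_i^*$ generically given the earlier placements. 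The $d_i^-$ edges from $v_i$ to previously-placed neighbors add $d_i^-$ new rows to $M(\p^*)$; a generic choice of $\p_i^*$ should yield $\min\bigl(d_i^-,\,\binom{d+1}{2} - r_{\mathrm{old}}\bigr)$ new independent rows, so the final rank reaches $\min(m,\binom{d+1}{2}) = \binom{d+1}{2}$, using $m \ge nd/2 \ge \binom{d+1}{2}$ (from $n \ge d+1$).

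The main obstacle is justifying that generic placement really achieves this rank growth at each step: the $d_i^-$ new rows are not free parameters but the coupled symmetric rank-one matrices $(\p_j^* - \p_i^*)(\p_j^* - \p_i^*)^t$ sharing the common point $\p_i^*$. I would handle this by an infinitesimal perturbation argument: differentiating $\e_j \e_j^t$ with respect to $\p_i$ yields perturbations of the form $-(\delta\p_i)\,\e_j^t - \e_j\,(\delta\p_i)^t$, and one checks that for generic $\e_j$ and $\delta\p_i$ these escape any fixed proper subspace of $\mathrm{Sym}^2(\RR^d)$, so the new rows cannot all lie in the current row-span whenever there is residual capacity. Alternatively, since we need only a single $\p^*$, one could bypass the inductive construction entirely by placing $\p^*$ on a suitably generic algebraic curve (for instance the moment curve $\p_i^* = (t_i,t_i^2,\dots,t_i^d)$) and verifying $\mathrm{rank}\,M(\p^*) = \binom{d+1}{2}$ directly---which may well be the cleanest route in the end.
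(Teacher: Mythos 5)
The paper does not actually prove Theorem~\ref{thm:generic-noconic}; it is quoted from Connelly~\cite{Connelly-global} without proof, so there is no internal argument to compare yours against. On its own merits: your reduction is sound --- the condition $\mathrm{rank}\,M(\p)<\binom{d+1}{2}$ is Zariski-closed and defined over $\QQ$, so a generic $\p$ avoids it as soon as a single witness $\p^*$ of full rank exists --- and your base case $G=K_{d+1}$ is complete and correct (the edges at $\p_0^*$ kill the diagonal of $\Q$ in the basis $\v_1,\dots,\v_d$, and polarizing the remaining edges kills the off-diagonal entries).

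The gap is exactly where you flag it, and it is not a routine verification. The $d_i^-$ new rows are rank-one matrices $(\p_j^*-\p_i^*)(\p_j^*-\p_i^*)^t$ all governed by the single new point $\p_i^*\in\RR^d$. If $\Phi_1,\dots,\Phi_c$ is a basis of the annihilator of the current row span, the image of the row for neighbor $j$ in the quotient is the vector $\bigl(\e_j^t\Phi_1\e_j,\dots,\e_j^t\Phi_c\e_j\bigr)$, and whether these vectors can be made to achieve rank $\min(d_i^-,c)$ depends on the joint geometry of the forms $\Phi_l$ and of the already-placed neighbors --- both of which are outputs of the earlier, non-free steps rather than quantities you control. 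Concretely, if two back-neighbors $\p_j^*$, $\p_{j'}^*$ differ by a vector in the common kernel of the matrices $\Phi_1,\dots,\Phi_c$, then their two rows have identical images in the quotient for \emph{every} placement of $\p_i^*$, and the increment falls short of your claimed value; ruling such degeneracies out requires an induction hypothesis substantially stronger than ``the rank so far equals the claimed value,'' which you have not formulated. A second symptom: your sketch uses the degree hypothesis only through the edge count $m\ge\binom{d+1}{2}$, so if the increment claim held as stated you would have proved a different (and stronger) theorem in which minimum degree plays no role; one should expect the degree condition to enter the argument more substantially. The moment-curve alternative is plausible but equally unverified. As written, the proof is complete only for $G=K_{d+1}$.
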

In Theorem \ref{thm:generic-noconic}, genericity means that the
coordinates of $\p$ do not satisfy any non-trivial algebraic equations
with coefficients in $\QQ$.  This means that, while the theorem holds
for almost every $\p$ in configuration space,
there can be exceptions.
Moreover, if one restricts oneself to some special class
of frameworks,
say, with some imposed non-trivial symmetry,
then the theorem may not hold
anywhere in that class.

Before continuing, we need two definitions:
\begin{definition}
Let $\SSS^n$ be the set of symmetric $n\times n$ matrices.
Let $C(G)$, the \emph{graph supported matrices},  be the linear space
of matrices (of any rank) in $\SSS^n$,
that vanish on the $\{ij\}$ entries corresponding
to non-edges of $G$.
\end{definition}

\begin{definition}
\label{def:stress}
A \emph{stress matrix} $\Omega$ is a matrix (of any rank)
in $C(G)$ with the added property that the
all-ones vector is in its kernel.
We say that a $d$-dimensional framework $(G,\p)$ is
\emph{in the kernel} of $\Omega$
if each of its d coordinate n-vectors is in the kernel.
In this case, we say that $\Omega$ is an \emph{equilibrium stress matrix}
for $(G,\p)$.

(The property of $\Omega$ being an equilibrium stress matrix for
$(G,\p)$
does not depend on the choice affine coordinates for
$\EE^d$.)
\end{definition}
In this paper, we are going to consider frameworks
$(G,\p)$ that are ``neighborhood affine rigid'' or
have an equilibrium stress matrix
of rank $n-d-1$.  This additional assumption (which is
not present in Theorem \ref{thm:generic-noconic})
will allow us to prove essentially tight results
about which frameworks have their edge directions on a conic
at infinity.

An earlier result using a similar setup is by
Alfakih \cite{alfakih2013affine}.
\begin{theorem}[\cite{alfakih2013affine}]
\label{thm:alf}
Suppose that a framework $(G,\p)$ in $\EE^d$
has an equilibrium stress matrix $\Omega$ of rank $n-d-1$.
Moreover suppose that each (inclusive) neighborhood of each vertex has a full $d$-dimensional affine span.
Then its edge directions do not lie on a conic at infinity of $\EE^d$.
\end{theorem}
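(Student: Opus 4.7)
Suppose for contradiction that the edge directions lie on a conic at infinity, so there is a nonzero symmetric $d\times d$ matrix $\Q$ with $\e_{ij}^t\Q\e_{ij}=0$ for every edge $ij$. The plan is to show that $\Q$ must annihilate every pair $(\e_{ij},\e_{ik})$ of edge vectors out of a common vertex $i$; the full affine spanning of each inclusive neighborhood then forces $\Q v=0$ for every $v\in\RR^d$, yielding the desired contradiction. The first observation is that $\v_i:=\Q\p_i$ defines an infinitesimal flex of $(G,\p)$, since $\e_{ij}^t(\v_j-\v_i)=\e_{ij}^t\Q\e_{ij}=0$; equivalently, by Proposition \ref{prop:no-conic-means-gr} and Remark \ref{rem:affine-local-global}, one has a one-parameter family of affine flexes of $(G,\p)$.

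The stress $\Omega$ enters through two standard consequences of $\Omega\mathbf{1}=0$ and of $\p$ lying coordinatewise in $\ker\Omega$. Writing $P$ for the $n\times d$ matrix whose rows are the $\p_i^t$, one obtains the vertex equilibrium $\sum_{j\sim i}\omega_{ij}\e_{ij}=0$ and the second-moment identity $\sum_{ij\in E}\omega_{ij}\e_{ij}\e_{ij}^t=0$ from expanding $P^t\Omega P=0$. The polarization identity
\[
2\,\e_{ij}^t\Q\e_{ik} \;=\; \e_{ij}^t\Q\e_{ij} + \e_{ik}^t\Q\e_{ik} - \e_{jk}^t\Q\e_{jk}
\]
immediately kills the off-diagonal form whenever $jk$ also happens to be an edge; so the heart of the argument is to propagate this vanishing to neighbor pairs $j,k$ of $i$ whether or not $jk$ is an edge.

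This propagation is where the difficulty lies. My plan is to set $u_{ij}:=\Q\e_{ij}$ and, for fixed $i$, study the inner products $u_{ij}\cdot\e_{ik}$ as $k$ ranges over the neighbors of $i$. The local equilibrium gives only the single scalar relation $\sum_{k\sim i}\omega_{ik}(u_{ij}\cdot\e_{ik})=0$, which is far weaker than the individual vanishings needed. The rank-$(n-d-1)$ hypothesis must be exploited here, through the fact that $\ker\Omega$ is precisely the affine span of the $\p_i$ and the all-ones vector, so that no stress-orthogonal vector field beyond the obvious affine ones is available. Once $u_{ij}\cdot\e_{ik}=0$ is established for all pairs, the affine spanning at $i$ gives $u_{ij}=0$ for every edge, and $\Q=0$ follows by affine spanning once more.

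The genuinely delicate step is extracting enough from the rank hypothesis, because the infinitesimal flex $V=PQ$ automatically lies in $\ker\Omega$ regardless of whether $\Q$ vanishes, so rank-$(n-d-1)$ is not exploited by that observation alone. I expect the cleanest route to work inside the projective formulation of Remark \ref{rem:term}, where $\Q$ sits naturally in the hyperplane at infinity and the stress and neighborhood conditions take a symmetric form, or alternatively to pass to the Gale-dual picture in which the rank condition becomes a statement about $n-d-1$ vectors in $\RR^{n-d-1}$ that meshes cleanly with the local spanning condition at each vertex.
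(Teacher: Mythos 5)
There is a genuine gap, and you have located it yourself: everything in your proposal up to and including the polarization identity is correct but only disposes of neighbor pairs $j,k$ of $i$ for which $jk$ is itself an edge. The entire content of the theorem is the propagation of $\e_{ij}^t\Q\e_{ik}=0$ to \emph{non-adjacent} neighbor pairs, and for that step you offer only candidate strategies (``work inside the projective formulation,'' ``pass to the Gale-dual picture'') rather than an argument. Your own diagnosis is accurate: the local equilibrium relation $\sum_{k\sim i}\omega_{ik}(u_{ij}\cdot\e_{ik})=0$ is a single scalar identity per pair $(i,j)$, the flex $P\Q$ lies in $\ker\Omega$ whether or not $\Q=0$, and so nothing you have written yet engages the rank-$(n-d-1)$ hypothesis in a way that could force the individual vanishings. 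As it stands this is a correct reduction of the problem plus a statement of where the difficulty is, not a proof.

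For comparison, the paper does not prove this theorem by completing your computation; it sidesteps it geometrically. The rank-$(n-d-1)$ stress certifies neighborhood affine rigidity (Proposition 5.9 of \cite{Gortler-affine-rigidity}). Given a conic at infinity $\Q$, one applies the explicit perturbation $m(\x)=\x+[\x^t\Q\x]\v$; the conic condition makes $(G,m(\p))$ neighborhood affine preequivalent to $(G,\p)$ (Proposition \ref{prop:ae}), neighborhood affine rigidity then forces a global affine precongruence, and Lemmas \ref{lem:ponq} and \ref{lem:line} convert that into the conclusion that $(G,\p)$ is ruled on a quadric with quadratic part $\Q$. Finally, the hypothesis that every inclusive neighborhood has full $d$-dimensional affine span makes every vertex a cone point of that quadric (Lemma \ref{lem:onecone}), which is impossible for a quadric of full affine span with $d$ such points in general position (Lemma \ref{lem:manycones}, Proposition \ref{prop:manycones2}). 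If you want to salvage a direct linear-algebraic proof along your lines, the place where the rank hypothesis actually enters is in showing that the symmetric matrix $\hat\Q$ built from $\Q$ and the data $\hat\p_i^t\hat\Q\hat\p_j$ obstructs transversality of $C(G)$ with the determinantal variety at $\Omega$ (the Strong Arnold Property discussion in Section \ref{sec:arnold}); that is essentially the route of Alfakih's original proof, and it is substantially longer than the geometric one.
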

The exact statement from \cite{alfakih2013affine}
also requires that the stress matrix is PSD.  However,
the proof, which is reliant on a
long series of linear-algebraic manipulations,
only uses the rank of the stress.

Our main result is  strictly stronger than
Theorem~\ref{thm:alf}.  The technique is also
more conceptual.  We relate a notion (described in
Section \ref{sec:affine}) called neighborhood affine
rigidity to that of edge directions not lying on a conic at infinity.
With this perspective,
a direct and simple  constructive argument can be
used to establish our theorem.  The statement
of our results needs a definition.

\begin{definition}
We say that a framework $(G,\p)$ in $\EE^d$
is \emph{ruled on a single quadric}
if all the vertices $\p_i$, and all of points on all
of the edges of the framework,
lie on some non-trivial, but possibly degenerate,
(possibly) inhomogeneous quadric $\QQQ$
in $\EE^d$. We can assume that, like $\p$, $\QQQ$ has a full
dimensional affine span.
For brevity, we will simply refer to this property as
\emph{ruled}.
(This property does not depend on the choice affine coordinates for
$\EE^d$.)

We may describe
$\QQQ$ by a defining polynomial $Q(\x) = \x^t\Q\x + \l^t\x+c$
where $\Q$ is a symmetric $d\times d$ matrix,
$\l$ is some vector in $\RR^d$ and $c$ is some constant.
Alternatively we may describe $Q(\x)$ in homogeneous coordinates as
$\hat{\x}^t \hat{\Q} \hat{\x}=0$ where $\hat{\Q}$ is
a
symmetric $(d+1)\times (d+1)$ matrix.
\end{definition}

A ruled framework is quite special. Indeed, assuming that $G$ is a connected
graph, a ruled framework in $\EE^2$ must be entirely contained within
two intersecting lines!
(See Figure~\ref{fig:ex} below.)

The main theorem of this paper is:
\begin{theorem}
\label{thm:main2}
Suppose that a
framework $(G,\p)$ in $\EE^d$
is neighborhood affine rigid.
Then its edge directions lie
on a conic at infinity of $\EE^d$ iff $(G,\p)$ is ruled.
\end{theorem}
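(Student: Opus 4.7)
The plan is to prove the two directions separately; the $(\Leftarrow)$ direction is routine, while $(\Rightarrow)$ is the substantive direction that uses neighborhood affine rigidity.

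For $(\Leftarrow)$, I would suppose $(G,\p)$ is ruled on a quadric $\QQQ$ with defining polynomial $Q(\x) = \x^t \Q \x + \l^t \x + c$. Because $\QQQ$ has full $d$-dimensional affine span, the quadratic part $\Q$ must be nonzero (otherwise $\QQQ$ would lie in a hyperplane). For each edge $\{ij\}$, the segment $\p_i + t \e_{ij}$, $t \in [0,1]$, lies on $\QQQ$, so the polynomial $Q(\p_i + t \e_{ij})$ in $t$ vanishes identically; reading off its $t^2$-coefficient gives $\e_{ij}^t \Q \e_{ij} = 0$ on every edge, placing the edge directions on the conic at infinity defined by $\Q$.

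For $(\Rightarrow)$, suppose the edges lie on a conic at infinity with nonzero symmetric $\Q$, and that $(G,\p)$ is neighborhood affine rigid. The plan is to produce $\tilde{\l} \in \RR^d$ and $\tilde{c} \in \RR$ so that $\tilde{Q}(\x) := \x^t \Q \x + \tilde{\l}^t \x + \tilde{c}$ vanishes at every vertex; once this is done, each edge automatically lies on $\tilde{Q} = 0$, since along the edge $\tilde{Q}(\p_i + t \e_{ij})$ is a polynomial in $t$ whose $t^2$-coefficient $\e_{ij}^t \Q \e_{ij}$ vanishes by the conic hypothesis and which additionally vanishes at $t = 0$ and $t = 1$, hence is identically zero. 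Producing $\tilde{\l}, \tilde{c}$ reduces to showing that the function $f : V \to \RR$ defined by $f(i) := \p_i^t \Q \p_i$ is the restriction to $\{\p_i\}$ of an affine function on $\EE^d$; equivalently, that for every affine relation $\sum_i a_i \p_i = 0$ with $\sum_i a_i = 0$ one has $\sum_i a_i f(i) = 0$.

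The first key step, which uses only the conic condition, is the observation that any affine relation supported on a single closed neighborhood $N[v] = \{v\}\cup N(v)$ is automatically respected by $f$. Indeed, $\sum_{i\in N[v]}a_i\p_i=0$ with $\sum a_i = 0$ is equivalent to the edge-vector identity $\sum_{j\in N(v)} a_j \e_{vj}=0$; expanding $f(j) = f(v) + 2\p_v^t \Q \e_{vj} + \e_{vj}^t \Q \e_{vj}$ and using $\e_{vj}^t\Q\e_{vj}=0$ then makes $\sum_{i\in N[v]} a_i f(i)$ telescope to zero. The remaining step, and the main obstacle I anticipate, is to invoke neighborhood affine rigidity (as defined in Section~\ref{sec:affine}) to conclude that every global affine relation among the $\p_i$ decomposes as a linear combination of such local ones, each supported on a single closed neighborhood; combined with the local computation, this yields that $f$ respects every affine relation, hence extends to an affine function and produces the desired $\tilde{\l}, \tilde{c}$ and ruling quadric $\tilde{Q}$.
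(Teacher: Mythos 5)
Your architecture is sound and is, in essence, the paper's own argument read ``dually'': your local computation that $f(i):=\p_i^t\Q\p_i$ respects every affine relation supported on a closed neighborhood is exactly the computation in Proposition~\ref{prop:ae}, and your step that a quadric through all vertices must then contain the edges is Lemma~\ref{lem:line} (cf.\ Remark~\ref{rem:line-affine}). The $(\Leftarrow)$ direction is fine and matches the paper. The one place you stop short is the step you yourself flag: that neighborhood affine rigidity forces every global affine relation among the $\p_i$ to decompose into neighborhood-supported ones. That claim is true, but it is not the stated definition of neighborhood affine rigidity, and it is precisely where the hypothesis must be engaged, so it cannot be left as a bare invocation.

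There are two ways to close it. (a) Prove the duality: if some $a^*\in\RR^n$ with $\sum_i a_i^*=0$ and $\sum_i a_i^*\p_i=\0$ lay outside the span of the neighborhood-supported relations, choose $q\in\RR^n$ orthogonal to all neighborhood-supported relations but with $q^t a^*\neq 0$, fix $\v\neq\0$, and set $\q_i:=\p_i+q_i\v$. Every affine relation of $\p$ supported on a closed neighborhood is then still satisfied by $\q$, so each closed neighborhood of $\q$ is an affine image of the corresponding neighborhood of $\p$; yet the relation $a^*$ is violated by $\q$, so no single affine map carries $\p$ to $\q$ --- contradicting neighborhood affine rigidity. (b) More directly, and this is what the paper does, avoid the relation-space reformulation entirely: apply the definition to the single witness configuration $\q_i:=\p_i+f(i)\v$. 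Your identity $f(j)=f(v)+2\p_v^t\Q(\p_j-\p_v)$ for $j$ in the closed neighborhood of $v$ says exactly that $\x\mapsto\x+[\x^t\Q\x]\v$ agrees with an affine map on each closed neighborhood, so $(G,\p)$ is neighborhood affine preequivalent to $(G,\q)$; neighborhood affine rigidity then supplies one global affine map $A(\x)=\A\x+\t$ with $\p_i+f(i)\v=\A\p_i+\t$ for all $i$, and pairing with $\v$ (take $\|\v\|=1$) gives $f(i)=\v^t(\A-\I)\p_i+\v^t\t$, which is exactly the affine extension $\tilde{\l},\tilde{c}$ you wanted (this is Lemma~\ref{lem:ponq}). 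With either repair your proof is complete and coincides with the paper's.
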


\begin{corollary}
\label{cor:main}
Suppose that a framework $(G,\p)$ in $\EE^d$
has an equilibrium stress matrix $\Omega$ of rank $n-d-1$.
Then its edge directions lie
on a conic at infinity of $\EE^d$ iff $(G,\p)$ is ruled.
\end{corollary}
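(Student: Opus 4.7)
The plan is to reduce Corollary \ref{cor:main} to Theorem \ref{thm:main2} by showing that the rank-$(n-d-1)$ equilibrium stress hypothesis implies neighborhood affine rigidity. Once that reduction is in place, Theorem \ref{thm:main2} immediately yields the stated equivalence, with no further work on conics at infinity or ruled frameworks required.

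The core linear-algebraic fact I would invoke is the standard consequence of a maximum rank equilibrium stress: if $\Omega \in C(G)$ is an equilibrium stress for $(G,\p)$ of rank $n-d-1$ and $\p$ has full $d$-dimensional affine span, then $\ker \Omega$ has dimension exactly $d+1$ and is spanned by the all-ones vector together with the $d$ coordinate vectors of $\p$. Consequently, any configuration $\q$ (in any ambient dimension) satisfying $\Omega \q = 0$ must be an affine image of $\p$; this is the classical affine rigidity consequence of a max-rank stress matrix, and it is not sensitive to choice of affine coordinates on $\EE^d$.

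Next, I would bridge this global affine rigidity to the neighborhood version defined in Section \ref{sec:affine}. The key observation is that, for each vertex $i$, the row of $\Omega$ indexed by $i$, combined with $\Omega \mathbf{1} = 0$, already expresses $\p_i$ as an affine combination of the configurations of its graph-neighbors, with coefficients depending only on $\Omega$. This local affine relation at each vertex, together with the global affine rigidity above, should directly yield the neighborhood affine rigidity hypothesis of Theorem \ref{thm:main2}.

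The main obstacle in my plan is exactly this last step: verifying that the precise formulation of ``neighborhood affine rigid'' used in Section \ref{sec:affine} is indeed implied by a max-rank equilibrium stress, rather than being a strictly stronger condition. I expect this to be routine given how affine rigidity is standardly derived from max-rank stresses, but it is the only substantive check; the rest of the corollary is a direct appeal to Theorem \ref{thm:main2}.
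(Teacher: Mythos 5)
Your proposal is correct and follows essentially the same route as the paper: the paper's entire proof is to cite Proposition 5.9 of \cite{Gortler-affine-rigidity} for the fact that a rank-$(n-d-1)$ equilibrium stress matrix certifies neighborhood affine rigidity, and then apply Theorem \ref{thm:main2}. The only difference is that you sketch the proof of that cited implication (the equilibrium relation at each vertex is an affine relation on the closed neighborhood, hence is preserved by the local affine maps, forcing $\Omega\q=0$ and thus $\q$ to be an affine image of $\p$), which is indeed the standard argument and fills in what the paper outsources to the reference.
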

\begin{proof}
Proposition 5.9 of~\cite{Gortler-affine-rigidity} explains that a
equilibrium stress matrix of rank $n-d-1$ is a sufficient (but not
necessary) certificate of neighborhood affine rigidity.
\end{proof}

Proposition~\ref{prop:manycones2}, below, tells us
that
any framework with even a subset of $d$ vertices in general affine
position,
each with a neighborhood of full affine span, cannot be ruled.
This makes
Corollary~\ref{cor:main} stronger than Theorem~\ref{thm:alf}.

Using our theorem, we will also show as a corollary, that the property
of a framework being ``super stable'' is preserved by invertible
projective transforms of $\EE^d$ as well as the ``coning'' and
``slicing'' operations.

Finally, we will describe a
relationship  between the notion of a ruled framework to the notion
of a matrix having ``the Strong Arnold Property'' and a  related connection
that has recently been made in another paper by Alfakih~\cite{spec}.

\begin{figure}[h]
\begin{center}
\includegraphics[width=.3\textwidth]{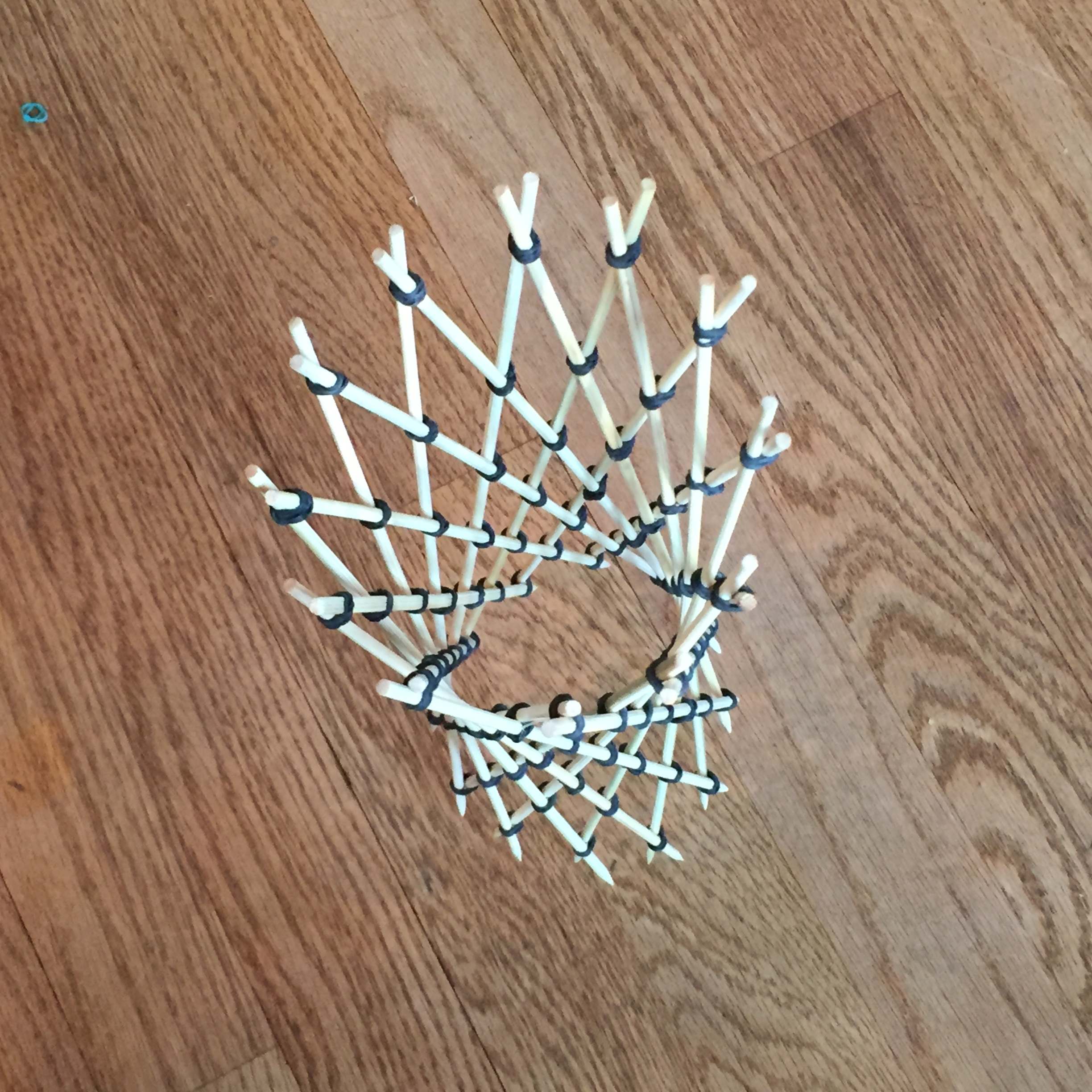}%
\hspace{1 in}
\includegraphics[width=.3\textwidth]{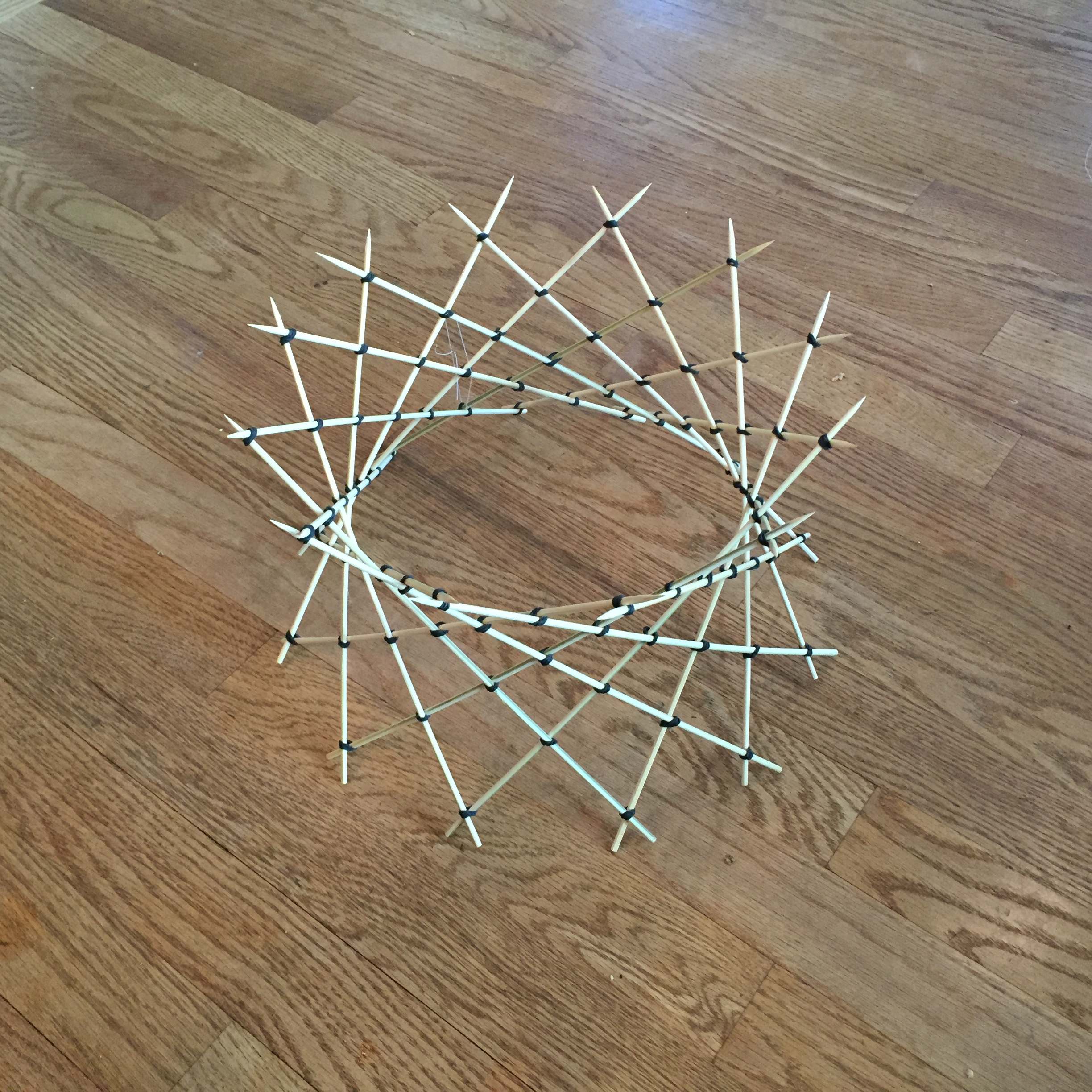}%
\end{center}
\caption{
Two Euclidean equivalent
``frameworks'' related by
an affine flex. The frameworks are
neigborhood affine rigid
and are ruled.
}
\label{fig:pic2}
\end{figure}

\section{Neighborhood Affine Rigidity}\label{sec:affine}

First  we review a few definitions about affine rigidity
from~\cite{Gortler-affine-rigidity}.

\begin{definition}
Let $(G,\p)$ be a framework in $\EE^d$.
We say that $(G,\p)$ is neigborhood
affine preequivalent to a second framework
$(G,\q)$ if for each vertex $i$, the point $\p_i$ and all of the
points $\p_j$,  where vertex $j$ is a neighbor of vertex $i$,
can be mapped to their associated positions in $\q$ by a (possibly singular)
affine transform depending only on $i$.

We say that $\p$ is affine precongruent to
$\q$ if all the vertices in $\p$
can be mapped to their positions in $\q$ by a (possibly singular)
affine transform.

(The inclusion of singular transforms is done for technical
reasons~\cite{Gortler-affine-rigidity}. This prevents  affine preequivalence
(and also precongruence) from being a symmetric relation, and is the source
for the ``pre'' terminology.)

We say that $(G,\p)$ is \emph{neighborhood
affine rigid} if for any other framework
$(G,\q)$, to which $(G,\p)$ is neighborhood
affine prequivalent, we always have that
$\p$ is affine precongruent to $\q$.
\end{definition}

\begin{definition}
Let $\Q$ be a symmetric $d\times d$ matrix. We define
an associated \emph{perturbation map} $m$ acting on a point $\x$ in
$\EE^d$ to be
\ba
m(\x) := \x + [\x^t \Q \x]\v
\ea
where
$\v$
is some chosen non-zero vector in $\RR^d$.

We denote by $m(\p)$ the configuration defined by mapping all of the
points of $\p$ by $m$.
\end{definition}

\begin{proposition}
\label{prop:ae}
Suppose $(G,\p)$ has its edge directions on a conic at infinity defined by
a non-zero matrix $\Q$. Let $m$ be an associated perturbation map.
Then $(G,\p)$ is neighborhood affine preequivalent to $(G,m(\p))$.
\end{proposition}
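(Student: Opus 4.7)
The plan is to construct, for each vertex $i$, an explicit affine map $A_i$ on $\EE^d$ that witnesses the neighborhood affine preequivalence: $A_i$ must send $\p_i$ and every neighbor $\p_j$ (with $j\sim i$) to its image under $m$. Since $m$ itself is not affine --- it has the quadratic term $\x^t\Q\x$ --- the key idea is to use the conic-at-infinity hypothesis on the edges incident to $i$ to replace that quadratic by an expression that is affine in $\p_j$ with coefficients depending only on $i$.

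Concretely, expanding $0=\e_{ij}^t\Q\e_{ij}=(\p_j-\p_i)^t\Q(\p_j-\p_i)$ for every $j\sim i$ yields the identity
\[
\p_j^t\Q\p_j \;=\; 2\,\p_i^t\Q\p_j \;-\; \p_i^t\Q\p_i.
\]
Substituting this into $m(\p_j)=\p_j+[\p_j^t\Q\p_j]\v$ and grouping by powers of $\p_j$ gives
\[
m(\p_j) \;=\; \bigl(I+2\v\,\p_i^t\Q\bigr)\p_j \;-\; (\p_i^t\Q\p_i)\,\v,
\]
which is visibly of the form $A_i(\x):=L_i\x+b_i$ with $L_i$ and $b_i$ depending only on $i$. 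It then remains to check that the same $A_i$ also maps $\p_i$ to $m(\p_i)$, which is a one-line computation: plugging $\p_i$ into the formula gives $\p_i+2(\p_i^t\Q\p_i)\v-(\p_i^t\Q\p_i)\v=\p_i+(\p_i^t\Q\p_i)\v=m(\p_i)$.

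The only substantive step is the linearization of $\p_j^t\Q\p_j$ in $\p_j$ supplied by the conic-at-infinity identity; after that, the argument is pure bookkeeping. The fact that the same $A_i$ automatically handles $\p_i$ itself, with no separate case analysis, is just a reflection of the homogeneous nature of the quadratic form together with the vanishing of $\e_{ii}^t\Q\e_{ii}$. I do not anticipate any real obstacle beyond writing down the correct $L_i$ and $b_i$, which the identity above essentially forces on us.
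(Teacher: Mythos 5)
Your proposal is correct and is essentially the paper's own argument: both use the conic-at-infinity identity $\p_j^t\Q\p_j = 2\p_i^t\Q\p_j - \p_i^t\Q\p_i$ to linearize the quadratic term of $m$ on the neighborhood of $\p_i$, yielding the same affine map $A_i(\x)=\x+[-\p_i^t\Q\p_i+2\p_i^t\Q\x]\v$. Your explicit check that $A_i$ also sends $\p_i$ to $m(\p_i)$ corresponds to the paper's remark that the identity holds trivially when $j=i$.
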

\begin{proof}
Let $\q:=m(\p)$.
We just need to show that
for each vertex $i$, there is an affine transform that maps
each vertex $\p_j$ of the (inclusive) neighborhood of $\p_i$,
to its position in
the configuration $\q$.

From our assumption that the edge directions are at a conic at infinity,
\ba
0 &=& (\p_j-\p_i)^t \Q (\p_j-\p_i)
\ea
we get
\ba
\p_j^t \Q \p_j
&=&
-\p_i^t \Q \p_i +2 \p_i^t \Q \p_j
\ea
when vertex $j$ is a neighbor of vertex $i$.
The same is trivially true when $j = i$.

Thus
on the (inclusive) neighborhood of $\p_i$,
the action of $m$
can
be modeled with the affine transform:
\ba
m(\x) = \x + [
-\p_i^t \Q \p_i +2 \p_i^t \Q \x
]\v
\ea

\end{proof}

\begin{lemma}
\label{lem:ponq}
Let $\Q$ be a non-zero symmetric $d\times d$ matrix and
let $m$ be an associated perturbation map.
Suppose that $\p$ is  affine precongruent to $m(\p)$.
Then all of the $\p_i$ must lie on a (possibly) inhomogeneous quadric
with its quadratic term defined by $\Q$.
\end{lemma}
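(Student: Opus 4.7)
The plan is to unfold the hypothesis into an algebraic identity in the $\p_i$'s and then extract a single polynomial equation whose quadratic part is exactly $\Q$. Write the affine map witnessing precongruence as $A(\x) = M\x + \t$ for some $d\times d$ matrix $M$ and some $\t \in \RR^d$. The hypothesis $A(\p) = m(\p)$ then reads, coordinate by coordinate in the vertices,
\[
M\p_i + \t = \p_i + [\p_i^t \Q \p_i]\,\v \qquad\text{for every $i$.}
\]
The left side is affine in $\p_i$ while the right side is quadratic, and the quadratic piece on the right lies entirely in the one-dimensional span of $\v$.

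The key move will be to collapse this vector identity to a scalar identity that captures the quadratic content. Since $\v \neq \0$, I can pick a linear functional $\l \in \RR^d$ with $\l^t\v = 1$. Applying $\l^t$ to both sides of the displayed equation and rearranging gives
\[
\p_i^t \Q \p_i \;-\; \l^t(M-I)\p_i \;-\; \l^t\t \;=\; 0 \qquad\text{for every $i$.}
\]
Thus the polynomial $Q(\x) := \x^t \Q \x - \l^t(M-I)\x - \l^t\t$ vanishes on every vertex $\p_i$. Its quadratic part is exactly $\Q$, which is non-zero by hypothesis, so $Q$ is a non-trivial inhomogeneous quadric of the required form, and the zero set of $Q$ is the desired (possibly) inhomogeneous quadric containing all the $\p_i$.

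There is essentially no serious obstacle here; the only thing to be careful about is that $\v \neq \0$ (given in the definition of a perturbation map), which is what allows us to choose the functional $\l$ and thereby convert the vector equation into a scalar quadric equation. If I wanted to say a bit more, I could note that the linear and constant terms of $Q$ depend on the choice of $\l$, $M$, $\t$, but that is irrelevant since the statement only asserts the existence of \emph{some} inhomogeneous quadric with quadratic term $\Q$ containing the configuration.
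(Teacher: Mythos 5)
Your proof is correct and is essentially the paper's own argument: the paper normalizes $\|\v\|=1$ and left-multiplies the vector identity by $\v^t$, which is exactly your choice of functional $\l$ specialized to $\l = \v$. The only cosmetic difference is that you allow an arbitrary $\l$ with $\l^t\v=1$; the substance — collapsing the vector equation to a scalar quadric whose quadratic part is $\Q$ — is identical.
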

\begin{proof}
Assume, w.l.o.g., that $\|v\|=1$. Expanding the
definition of $m$ and rearranging, affine precongruence
implies there is an affine map $A(\x) := \A\x + \t$
(where $\A$ is a $d\times d$ matrix, and $\t$
is a vector in $\RR^d$)
such that for all $\x=\p_i$
\[
m(\x) = \x + [\x^t \Q \x]\v = \A\x + \t
\]
and
\[
[\x^t \Q \x]\v = (\A-\I)\x + \t =:\A'\x + \t
\]
Multiplying by $\v^t$ on the left, we obtain
\[
\x^t \Q \x = \left[\v^t\A'\right]\x + \v^t\t
\]
which gives a non-trivial quadric for the $\p_i$, since the left-hand side, at
least, is non-zero.
\end{proof}

\begin{lemma}
\label{lem:line}
Let $\QQQ$ be a
(possibly) inhomogeneous quadric with quadratic terms defined by
a non-zero symmetric
matrix $\Q$. Suppose that two points $\x_1$ and $\x_2$ are both on $\QQQ$,
and that for  the edge vector $\e:=\x_2-\x_1$ we have $\e^t\Q\e=0$.
Then all the points on the line spanned by $\x_1$ and $\x_2$ are on $\QQQ$.
\end{lemma}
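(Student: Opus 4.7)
The plan is to parametrize the line through $\x_1$ and $\x_2$ by $\x(t) := \x_1 + t\e$ and check directly that the defining polynomial $Q(\x) = \x^t\Q\x + \l^t\x + c$ of $\QQQ$ vanishes identically in $t$. The whole argument reduces to expanding a single univariate polynomial.

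First I would substitute $\x(t)$ into $Q$ and expand, getting
\[
Q(\x(t)) = Q(\x_1) + t\bigl(2\x_1^t\Q\e + \l^t\e\bigr) + t^2\, \e^t\Q\e.
\]
This is a quadratic in $t$, and the goal is to show all three coefficients vanish. The constant term is $Q(\x_1) = 0$ because $\x_1 \in \QQQ$ by hypothesis. The quadratic coefficient is $\e^t\Q\e$, which is also zero by hypothesis. This leaves only the linear coefficient to handle.

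For the linear coefficient, I would evaluate at $t = 1$, using $\x(1) = \x_2 \in \QQQ$:
\[
0 = Q(\x_2) = 0 + (2\x_1^t\Q\e + \l^t\e) + 0,
\]
so the linear coefficient also vanishes. Thus $Q(\x(t)) \equiv 0$, meaning every point on the line through $\x_1$ and $\x_2$ lies on $\QQQ$.

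There is no real obstacle here; the statement is essentially the observation that a line meeting a quadric in two points is contained in the quadric whenever the line's direction vector is an isotropic vector for the quadratic form defining the quadric (so the intersection polynomial is at most linear, but also has two distinct roots, forcing it to be identically zero). The only mild care needed is to track the affine (inhomogeneous) part $\l^t\x + c$ of $Q$, which is exactly what the evaluation at $t=1$ handles.
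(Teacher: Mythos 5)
Your proof is correct. The computation $Q(\x_1+t\e) = Q(\x_1) + t\bigl(2\x_1^t\Q\e + \l^t\e\bigr) + t^2\,\e^t\Q\e$ is right, and the three coefficients vanish for exactly the reasons you give (hypothesis on $\x_1$, evaluation at $t=1$ using the hypothesis on $\x_2$, and the isotropy hypothesis $\e^t\Q\e=0$).

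Your route differs mildly from the paper's. The paper argues projectively: $\x_1$, $\x_2$, and the point at infinity $\e$ are three points of the projective line $\ell$ lying on the quadric, and a quadric meeting a line in more than two points contains it. (A remark in the paper also sketches an affine variant that produces the midpoint $\tfrac12[\x_1+\x_2]$ as a third root.) You instead kill the leading coefficient of the restricted polynomial directly, so that two roots already force identical vanishing --- the same counting idea, but organized as an explicit coefficient computation rather than a root count on $\PP^1$. Your version is more self-contained and elementary; the projective version makes transparent why the hypothesis $\e^t\Q\e=0$ is exactly the statement that the third intersection point (at infinity) lies on the quadric, which is the viewpoint the paper reuses elsewhere (e.g., in identifying the conic at infinity of a ruling quadric). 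Either argument is complete as a proof of the lemma.
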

\begin{proof}
Thinking projectively (see Remark~\ref{rem:term}),
we have the points $\x_1$, $\x_2$ and $\e$ (at infinity)
on a line $\ell$ in $\PP^d$.  If $Q$ is the polynomial defining
$\QQQ$, then $Q(\e) = \e^t\Q\e$, since $\e$ is at infinity.
Thus $\QQQ$ has more than $2$ intersection points with
$\ell$, so $Q$ vanishes identically on it.
\end{proof}
\begin{remark}\label{rem:line-affine}
We could prove Lemma \ref{lem:line} without
leaving the affine setting.  Suppose that
$\QQQ$ is defined by the polynomial $Q(\x) = \x^t\Q\x + \l^t\x+c$.
The hypothesis about the edge vector implies the identity
$\x^t_1\Q\x_1 + \x^t_2\Q\x_2 = 2\x_1^t \Q \x_2$.  We then
compute that $Q(\frac{1}{2}[\x_1+\x_2]) = \frac{1}{2}[Q(\x_1)+Q(\x_2)] = 0$,
which lets us proceed as above.
\end{remark}
We will see a different approach to Lemma \ref{lem:line} in the
discussion about the Strong Arnold Property in Section
\ref{sec:arnold}, below.

\begin{proposition}
\label{prop:pruled}
Suppose $(G,\p)$
has its edge directions on a conic at infinity defined by
$\Q$ and
let $m$ be an associated perturbation map.
Suppose that $\p$ is  affine precongruent to $m(\p)$.
Then $(G,\p)$ must be a ruled framework.
\end{proposition}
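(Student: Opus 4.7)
The plan is to combine the two preceding lemmas directly. From Lemma~\ref{lem:ponq}, applied to our hypothesis that $\p$ is affine precongruent to $m(\p)$, we immediately obtain a (possibly inhomogeneous, possibly degenerate) quadric $\QQQ$ whose quadratic part is $\Q$ and which contains all of the vertices $\p_i$. So it only remains to promote ``vertices on $\QQQ$'' to ``all edge points on $\QQQ$''.

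Next, I would fix an arbitrary edge $\{ij\}$ of $G$, with edge vector $\e_{ij} = \p_j - \p_i$. The conic-at-infinity hypothesis gives $\e_{ij}^t \Q \e_{ij} = 0$, and we have just shown $\p_i, \p_j \in \QQQ$. These are exactly the hypotheses of Lemma~\ref{lem:line} (with $\x_1 = \p_i$, $\x_2 = \p_j$), so the entire affine line through $\p_i$ and $\p_j$, and in particular the edge segment itself, lies on $\QQQ$. Since this holds for every edge, every vertex and every point on every edge of $(G,\p)$ lies on $\QQQ$, which is exactly the definition of ruled.

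Finally, $\QQQ$ is a non-trivial quadric because its quadratic part $\Q$ is non-zero by hypothesis (the perturbation map is defined from a non-zero $\Q$ witnessing the conic at infinity). The full-affine-span condition on $\QQQ$ demanded by the definition of ``ruled'' can be arranged by restricting to the affine hull of $\p$, which is all of $\EE^d$ by our standing assumption on configurations.

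There is no real obstacle here; the proposition is essentially a packaging of Lemmas~\ref{lem:ponq} and~\ref{lem:line}. The only thing worth being careful about is that the quadric produced in Lemma~\ref{lem:ponq} is exactly the one whose quadratic part matches the matrix $\Q$ used both to define the conic at infinity and to build the perturbation map $m$; this common $\Q$ is what lets the hypothesis $\e_{ij}^t \Q \e_{ij} = 0$ of the conic feed directly into Lemma~\ref{lem:line} applied to $\QQQ$.
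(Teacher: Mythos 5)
Your proposal is correct and follows exactly the paper's own argument: apply Lemma~\ref{lem:ponq} to place the vertices on a quadric with quadratic part $\Q$, then feed the conic-at-infinity condition $\e_{ij}^t\Q\e_{ij}=0$ into Lemma~\ref{lem:line} to put every edge on that quadric. Your extra remarks on the non-triviality of the quadric and the matching of $\Q$ across the two lemmas are accurate but just make explicit what the paper leaves implicit.
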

\begin{proof}
From Lemma~\ref{lem:ponq}, we have each point $\p_i$
on a (possibly) inhomogeneous quadric, with its quadratic terms defined by a $d\times d$ matrix
$\Q$.
By assumption,
for each edge vector, $\e_{ij}:=\p_j-\p_i$, we have $\e_{ij}^t\Q\e_{ij}=0$.
Thus from Lemma~\ref{lem:line} we see that $(G,\p)$ must be ruled.
\end{proof}

We are now ready to prove our main result.
\begin{proof}[Proof of Theorem \ref{thm:main2}]
Let $(G,\p)$ be a framework that is neighborhood affine rigid.  If
$(G,\p)$ is ruled on a quadric $\QQQ$, the points
of $\QQQ$ at infinity are a conic at infinity.
As in the proof of Lemma \ref{lem:line},
the edge direction vectors are on this conic.

For the ``only if'' direction,  assume that
it has its edge directions on a conic
at infinity. Let $m$ be an associated perturbation map.
Then from Proposition~\ref{prop:ae}
$(G,\p)$ is neighborhood affine preequivalent to $(G,m(\p))$.
Since $(G,\p)$ is neighborhood affine rigid, this implies that
$\p$ is affine precongruent to $m(\p)$.
So from Proposition~\ref{prop:pruled}, $(G,\p)$ must be ruled.
\end{proof}

\begin{figure}[h]
\begin{center}
\includegraphics[width=.2\textwidth]{pee}%
\hspace{1 in}
\includegraphics[width=.2\textwidth]{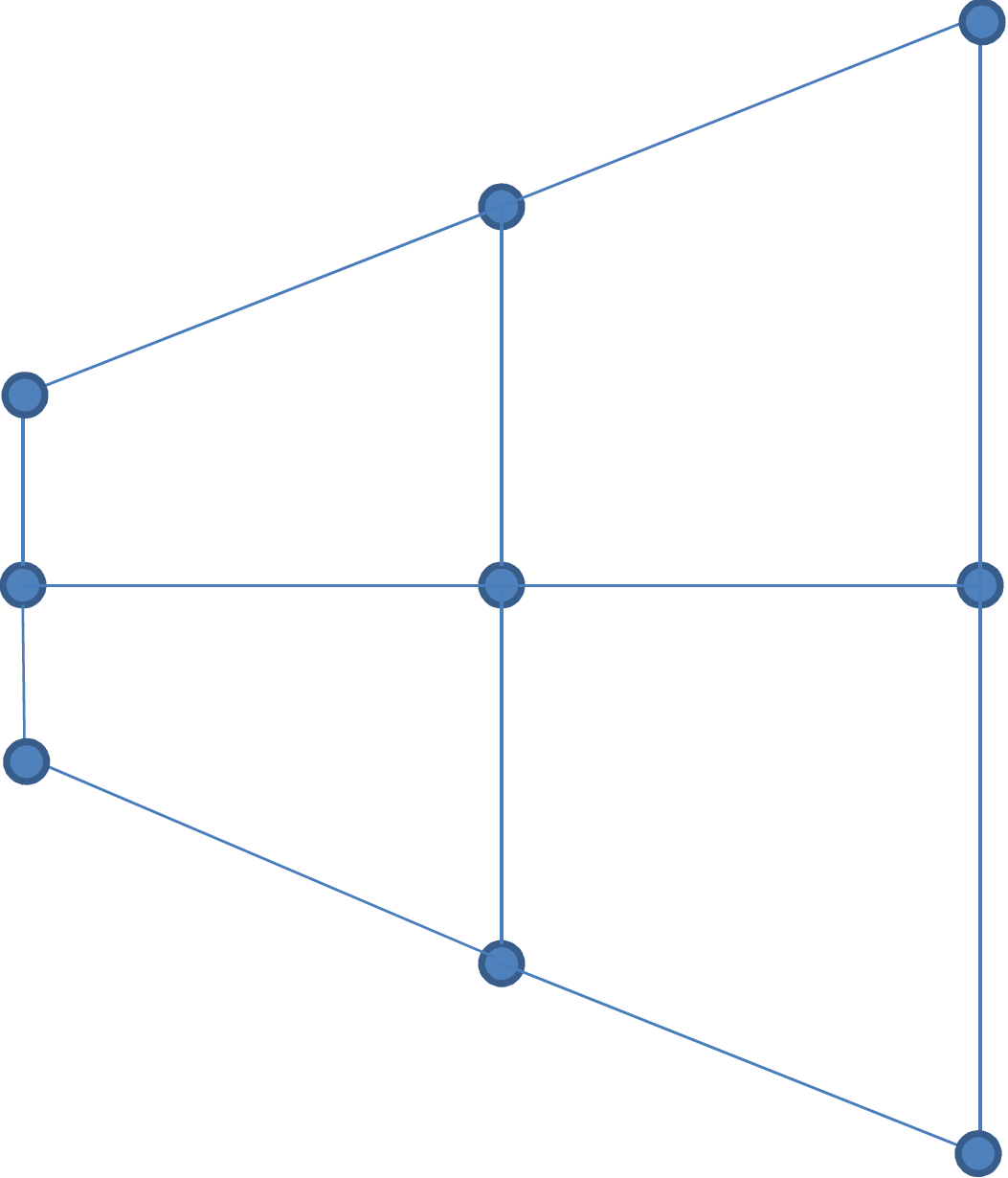}%
\end{center}
\caption{(Left) A framework $(G,\p)$ with its edge directions
at a conic at infinity.
(Right) A framework $(G,\q)$ with the property that each vertex neighborhood
in $(G,\q)$ can be obtained from its corresponding neighborhood in $(G,\p)$
through an affine transform. There is no global affine transformation that
maps $\p$ to $\q$.
}
\label{fig:grid}
\end{figure}

Figure~\ref{fig:grid} shows a simple example of the construction used in
the proof of Theorem~\ref{thm:main2}. In the original framework
$(G,\p)$, shown
on the left,
all of the edge directions are either horizontal or vertical. Thus,
they lie on the conic at infinity defined by the equation $xy=0$.
However, this framework is not ruled.
We will see that it is not neighborhood affine rigid.

To define the perturbation map,
we set
the origin point to be the center vertex of the framework, which,
for simplicity,
we will assume
has Cartesian coordinates $[0,0]^t$ and
we set
the vector
$\v$ to be $[0,1]^t$ (in the vertical direction).
The resulting
map can be
described, in coordinates, by
\ba
[x,y]^t \mapsto [x,y+xy]^t
\ea
On the right we show  $(G,\q)$,
the image of $(G,\p)$ under this map.
It is easy to see  that each vertex neighborhood
of $(G,\q)$ can be obtained from its configuration in $(G,\p)$ under
some affine transform, but that the full configuration $\q$
cannot be obtained from $\p$ using a single, global affine
transform.

\section{What do ruled frameworks look like?}
We now unpack what it means to have a ruled framework.

\begin{definition}
Let $S$
be a point set in $\EE^d$.
We say that $\x$ is a \emph{cone point} of $S$
if for any other point $\y$ on $S$, we have the entire line spanned by
$\x$ and $\y$ is in $S$.
\end{definition}

The following is standard
(see e.g. \cite[Example 3.3 and Lecture 22]{harris2013algebraic}).
Every non-trivial
quadric $\QQQ$
can be put in canonical form under a projective transform
by diagonalizing and normalizing $\hat{\Q}$, the
$(d+1)\times (d+1)$ symmetric matrix
that describes $\QQQ$
in homogenized form.
The resulting
canonical matrix will have some number of $+1$, $-1$ and $0$ diagonal
entries.
Call the rank of this matrix $r$.
If the matrix is definite, or semi-definite, then $\QQQ$ cannot
have full affine span. So let us now assume that the matrix is indefinite.
When $r=d+1$, the quadric is smooth.
Otherwise, (we have $1<r<d+1$),
the
quadric is the cone over a smooth quadric
of dimension $r-2$ in $\EE^{r-1}$,
with a cone point set  comprising an affine space of
dimension $d-r$.
In particular, all non-smooth points must
be cone points.
(See Figure~\ref{fig:3d} below.)

This canonical picture immediately gives us the following two Lemmas.

\begin{lemma}
\label{lem:onecone}
Let $\x$ be a point of a quadric $\QQQ$ in $\EE^d$
that has $d$ linearly independent
ruling directions
(lines within the conic through that point).
Then $\x$ must be a cone point
of $\QQQ$.
\end{lemma}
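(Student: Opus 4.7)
The plan is to argue the contrapositive: if $\x$ is not a cone point of $\QQQ$, then $\x$ admits strictly fewer than $d$ linearly independent ruling directions. The canonical-form discussion just above the lemma does most of the work: every non-cone point of $\QQQ$ must be a smooth point, because the cone points of a (non-definite, non-semi-definite) quadric form an affine space of dimension $d-r$ and the complement is the smooth locus. So I would reduce immediately to the case that $\x$ is a smooth point of $\QQQ$, and aim to show that in that case the set of ruling directions at $\x$ spans a subspace of dimension at most $d-1$.

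To control the ruling directions at a smooth point, I would pass to the defining polynomial $Q(\y) = \y^t \Q \y + \l^t \y + c$ of $\QQQ$ and consider an arbitrary line $\y(s) = \x + s\u$ through $\x$ that is entirely contained in $\QQQ$. Substituting gives a polynomial identity $Q(\x + s\u) \equiv 0$ in $s$, and differentiating at $s = 0$ yields $\nabla Q(\x)\cdot \u = 0$. Since $\x$ is smooth, $\nabla Q(\x)$ is a nonzero vector in $\RR^d$, so this is a single nontrivial linear condition on $\u$. Therefore all ruling directions at $\x$ lie in the $(d-1)$-dimensional tangent hyperplane $T_\x\QQQ = \{\u : \nabla Q(\x)\cdot \u = 0\}$, and cannot contain $d$ linearly independent vectors. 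This contradicts the hypothesis, so $\x$ must have been a cone point to begin with.

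The main thing to be careful about is that ``ruling direction'' is interpreted as a direction $\u \in \RR^d$ of a line through $\x$ lying entirely on $\QQQ$, and that linear independence is measured among these direction vectors; the argument above is exactly adapted to this reading. I do not anticipate a real obstacle, since once we are in the smooth case the tangent-hyperplane containment is immediate from differentiating $Q$ along the line, and the non-smooth case is disposed of by quoting the canonical form already recorded in the paper.
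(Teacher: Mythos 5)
Your proof is correct and follows essentially the same route as the paper's: the paper likewise reduces to the observation that a smooth point's tangent hyperplane contains all ruling directions (so $d$ independent rulings force non-smoothness, hence a cone point via the canonical form). You simply make explicit the differentiation of $Q$ along the line and the identification of non-cone points with smooth points, both of which the paper leaves implicit.
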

\begin{proof}
The tangent hyperplane at a smooth point must include all ruling directions
through  that point.
So
having $d$ ruling directions precludes the existence of such a tangent.
\end{proof}

\begin{lemma}
\label{lem:manycones}
Suppose $\QQQ$ is a non-trivial
(possibly) inhomogeneous quadric with a full affine
span in
$\EE^d$. Then $\QQQ$ cannot have $d$ cone points in general affine position.
\end{lemma}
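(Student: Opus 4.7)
The plan is to read off the conclusion directly from the canonical form classification given in the paragraph immediately preceding the lemma. Since $\QQQ$ is non-trivial and has full affine span in $\EE^d$, the matrix $\hat{\Q}$ cannot be definite or semi-definite, so its canonical form must contain at least one $+1$ and at least one $-1$ on the diagonal. In particular, the rank $r$ of $\hat{\Q}$ satisfies $r \geq 2$.

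Next I would split on whether $\QQQ$ is smooth. If $r = d+1$, then $\QQQ$ is a smooth quadric and has no cone points at all, so the statement holds trivially. Otherwise $2 \le r \le d$, and the classification tells us that the cone point locus of $\QQQ$ is an affine subspace of dimension exactly $d - r$. Since $r \ge 2$, this dimension is at most $d-2$.

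To finish, I would observe that $d$ points in general affine position in $\EE^d$ are affinely independent, and therefore span an affine subspace of dimension $d-1$. Such a set cannot be contained in any affine subspace of dimension $d-2$ or lower, so in particular it cannot be contained in the cone point locus of $\QQQ$. This contradicts the existence of $d$ cone points in general affine position.

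There is no real obstacle here: the lemma is essentially a dimension count once the canonical form of a quadric with full affine span is in hand. The only point worth being careful about is the reduction to the indefinite case (ruling out the semi-definite possibility via the full-affine-span hypothesis) and the observation that a smooth quadric contributes no cone points, so that the bound $\dim(\text{cone points}) \le d-2$ covers every relevant case.
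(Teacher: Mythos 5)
Your proposal is correct and follows the same route as the paper: reduce to the indefinite canonical form via the full-affine-span hypothesis, bound the dimension of the cone-point locus by $d-2$, and note that $d$ points in general affine position span a $(d-1)$-dimensional affine subspace. You simply spell out the smooth case and the final dimension count more explicitly than the paper does.
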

\begin{proof}
The full affine span means that the rank of the quadric must be greater than
$1$ and indefinite.
Thus, from our canonical picture of quadrics,
the dimension of the affine space of cone points must be
no larger than $d-2$.
\end{proof}

\begin{proposition}
\label{prop:manycones2}
Suppose $(G,\p)$, a framework in $\EE^d$,
has a subset of $d$ vertices in general affine position,
each with neighborhood in $(G,\p)$ with full affine span.
Then $(G,\p)$ cannot be ruled.
\end{proposition}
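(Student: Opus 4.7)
The plan is to argue by contradiction, leveraging the two structural lemmas on cone points that were just established. Suppose $(G,\p)$ were ruled on some non-trivial (possibly degenerate, possibly inhomogeneous) quadric $\QQQ$ with full affine span in $\EE^d$. Let $\p_{i_1},\dots,\p_{i_d}$ be the given subset of $d$ vertices in general affine position, each with inclusive neighborhood of full affine span in $(G,\p)$.

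First I would show that each $\p_{i_k}$ has $d$ linearly independent ruling directions of $\QQQ$ through it. The hypothesis that the neighborhood of $\p_{i_k}$ has full affine span means that among the vectors $\p_j - \p_{i_k}$, where $j$ ranges over the neighbors of $i_k$, one can select $d$ linearly independent ones. Since $(G,\p)$ is ruled on $\QQQ$, each edge $\{i_k,j\}$ — and indeed the entire line it spans — lies in $\QQQ$, so each of these $d$ linearly independent edge directions is a ruling direction of $\QQQ$ at $\p_{i_k}$. Lemma~\ref{lem:onecone} then forces $\p_{i_k}$ to be a cone point of $\QQQ$.

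Applying this argument to each of the $d$ indices $i_1,\dots,i_d$ yields $d$ cone points of $\QQQ$ in general affine position, directly contradicting Lemma~\ref{lem:manycones}. This contradiction shows that $(G,\p)$ cannot be ruled.

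The argument is essentially a direct combination of Lemmas~\ref{lem:onecone} and \ref{lem:manycones}, so there is no real obstacle; the only small point to be careful about is ensuring that the linearly independent edge vectors at $\p_{i_k}$ really do come from the neighborhood, which is exactly what the full-affine-span hypothesis on the neighborhood supplies.
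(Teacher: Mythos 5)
Your proposal is correct and follows exactly the paper's argument: use the full-affine-span hypothesis on each neighborhood to get $d$ independent ruling directions, invoke Lemma~\ref{lem:onecone} to conclude each of the $d$ vertices is a cone point, and then contradict Lemma~\ref{lem:manycones}. You simply spell out in more detail the step (left implicit in the paper) that a quadric containing an edge segment contains the whole line, so the independent edge vectors really are ruling directions.
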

\begin{proof}
By assumption, $(G,\p)$ has a full affine span.
Suppose that $(G,\p)$ were ruled by a non-trivial quadric $\QQQ$.
Then from Lemma~\ref{lem:onecone},
each of the $d$ vertices would be a cone point.
But this would contradict Lemma~\ref{lem:manycones}.
\end{proof}

\begin{remark}
\label{rem:stronger}
We can now see that Corollary~\ref{cor:main} is stronger than
Theorem~\ref{thm:alf}.
In light of Proposition~\ref{prop:manycones2}
the assumed affine span condition
of Theorem~\ref{thm:alf}
is strictly stronger than being non-ruled.
\end{remark}

\section{Super Stability}

Recall that a framework in $\EE^d$ is universally rigid if
there is no  second framework $(G,\q)$ in any dimension
that is equivalent but not congruent to $(G,\p)$.
Universal rigidity (unlike infinitesimal rigidity) is
known not to be invariant under projective transformations
or under the
``slicing'' operation described
below~\cite{Connelly-Gortler}.
A slightly stronger property than universal rigidity is
called super stability~\cite{Connelly-energy,Gortler-thurston2}.
In this section, we show that super stability is well-behaved with
respect to these operations.  We start with some definitions.

\begin{definition}
A framework $(G,\p)$ in $\EE^d$ with a full dimensional affine span is called
\emph{super stable} if it has a positive semidefinite
(PSD) equilibrium stress matrix
$\Omega$ of rank $n-d-1$, and its edge directions do not lie on a conic at
infinity.
\end{definition}

\begin{theorem}
[\cite{Connelly-energy}]
Super stability implies universal rigidity.
\end{theorem}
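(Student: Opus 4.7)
The plan is to use the PSD equilibrium stress matrix $\Omega$ as an energy function that vanishes precisely on the affine images of $\p$, and then combine this with the no-conic-at-infinity hypothesis to upgrade from affine to Euclidean equivalence.

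Let $(G,\q)$ be any framework in $\EE^D$ (for any $D$) that is equivalent to $(G,\p)$, and write $P$ and $Q$ for the $n\times d$ and $n\times D$ matrices whose rows are the points of $\p$ and $\q$ respectively. The key observation is that the quantity $\mathrm{tr}(X^t \Omega X) = \sum_{\{ij\} \in E(G)} \omega_{ij} \|\x_i - \x_j\|^2$ (using that $\Omega$ is a stress matrix and hence its row sums vanish) depends only on edge lengths. Since $(G,\p)$ and $(G,\q)$ have the same edge lengths, we get $\mathrm{tr}(Q^t \Omega Q) = \mathrm{tr}(P^t \Omega P) = 0$, where the last equality uses that $\Omega$ is an equilibrium stress matrix for $(G,\p)$. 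Because $\Omega$ is PSD, this forces every column of $Q$ to lie in $\ker \Omega$.

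Next, I would exploit the rank hypothesis: $\ker \Omega$ has dimension $n-(n-d-1)=d+1$, and it contains the all-ones vector together with the $d$ coordinate vectors of $\p$ (which span a $(d+1)$-dimensional space by the full affine span of $\p$). Hence $\ker \Omega$ is exactly spanned by these vectors, so every coordinate column of $Q$ is an affine combination of coordinate columns of $P$. In configuration-space language, there exist a $D\times d$ matrix $A$ and $\t \in \RR^D$ with $\q_i = A\p_i + \t$ for all $i$; in particular the image of $\q$ lies in an affine subspace of $\EE^D$ of dimension at most $d$, which we may identify with $\EE^d$.

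Finally, I would invoke Proposition~\ref{prop:no-conic-means-gr}: we now have an affine map taking $\p$ to $\q$ while preserving all edge lengths, i.e.\ an affine flex of $(G,\p)$; since the edge directions of $(G,\p)$ do not lie on a conic at infinity, this affine flex must be a Euclidean motion. A Euclidean motion of $\EE^d$ extends to a Euclidean motion of the ambient $\EE^D$, so $(G,\q)$ is congruent to $(G,\p)$, which is exactly universal rigidity.

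The main obstacle is really just the first step, extracting from the PSD-plus-correct-rank hypothesis that $\q$ is an affine image of $\p$; once that is in hand, the no-conic-at-infinity hypothesis in the definition of super stability is precisely what is needed to promote affine equivalence to congruence, and the jump from fixed dimension $d$ to arbitrary ambient dimension $D$ comes essentially for free from the dimension count on $\ker \Omega$.
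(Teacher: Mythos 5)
The paper states this theorem only as a citation to Connelly's \emph{Rigidity and energy} and gives no proof of its own; your argument is a correct reconstruction of Connelly's standard energy argument --- the trace identity $\mathrm{tr}(X^t\Omega X)=\sum_{\{ij\}}\omega_{ij}\|\x_i-\x_j\|^2$ forcing the coordinate columns of $Q$ into $\ker\Omega$, the rank count identifying $\ker\Omega$ with the span of the all-ones vector and the columns of $P$ so that $\q$ is an affine image of $\p$, and the no-conic-at-infinity hypothesis (via Proposition~\ref{prop:no-conic-means-gr}) upgrading the affine flex to a Euclidean motion. This matches the cited source's approach and I see no gaps.
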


\begin{definition}
A \emph{cone graph} is a graph with $n+1$ vertices
where vertex $0$
is connected to
all the others.
We will consider \emph{cone frameworks} of this graph
in $\EE^{d+1}$
denoted
as $\p_0*(G,\p)$.
Here $G$ is the subgraph on $n$ vertices
induced by removing vertex $0$,
$\p$ is
a configuration of the $n$ vertices of $G$ in $\EE^{d+1}$,
and
$\p_0$ is another point in $\EE^{d+1}$ that is not coincident
with any of the
points of $\p$.
\end{definition}
We define several operations on cone frameworks.
\begin{definition}
Given a cone framework $\p_0*(G,\p)$ in $\EE^{d+1}$, the process of
\emph{sliding} denotes
moving points of $\p$ along their lines
connecting them to $\p_0$, while avoiding $\p_0$ itself.
\end{definition}
\begin{definition}
We call a cone framework
$\p_0*(G,\p)$ in $\EE^{d+1}$,
\emph{flat} if each of the $n$ vertices
of $\p$ lies in a $d$-dimensional Euclidean subspace that does not
include $\p_0$.
\end{definition}
\begin{definition}
Given a framework $(G,\p)$ in $\EE^d$, we can \emph{cone} it
by placing $\EE^d$ in a hyperplane in
$\EE^{d+1}$, and then adding a cone vertex
at some location $\p_0$ (outside of the affine span of $\p$)
to create a flat cone framework in
$\EE^{d+1}$.

Given a cone framework $\p_0*(G,\p)$ in $\EE^{d+1}$, we can
\emph{slice} it, by sliding all points of $\p$
to create a flat cone framework.
And then we can consider the resulting subframework of $G$
as living in $\EE^d$.
\end{definition}
This section's main result is:
\begin{theorem}\label{thm:coneslice}
Super stability is invariant under coning and slicing.
\end{theorem}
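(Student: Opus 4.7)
The plan is to use Corollary~\ref{cor:main} to repackage super stability as the conjunction of two conditions: (i) existence of a PSD equilibrium stress matrix of rank $n-d-1$, and (ii) the framework is not ruled. I would then verify separately that each survives coning and slicing.

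For (i), coning is a direct block-matrix construction. Placing $\p_0$ at the origin of $\EE^{d+1}$ and identifying the original $\EE^d$ with the hyperplane $\{x_{d+1}=1\}$, the matrix
$$\hat\Omega=\begin{pmatrix} 0 & \mathbf{0}^t \\ \mathbf{0} & \Omega \end{pmatrix}$$
is a PSD equilibrium stress matrix for $\p_0*(G,\p)$ of rank $(n+1)-(d+1)-1=n-d-1$: it annihilates $\mathbf{1}$ (because $\Omega\mathbf{1}=0$), and it annihilates every coordinate vector of the lifted configuration (because $\Omega$ annihilates those of $\p$, and $\Omega\mathbf{1}=0$ handles the $(d{+}1)$-st coordinate). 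Passing from a flat cone to its base in $\EE^d$ is the same computation in reverse: applying any cone stress matrix to $\mathbf{1}$ and to the $(d{+}1)$-st coordinate vector forces the zeroth row and column to vanish, leaving a PSD equilibrium stress matrix of rank $n-d-1$ on $(G,\p')$ in $\EE^d$.

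For (ii), I would use quadric geometry in both directions. If $\p_0*(G,\p)$ were ruled by some $\hat\QQQ\subset\EE^{d+1}$, then each cone edge from $\p_0$ lies in $\hat\QQQ$, and ruledness forces the entire line through $\p_0$ and $\p_i$ into $\hat\QQQ$; hence $\p_0$ is a cone point of $\hat\QQQ$, and intersecting $\hat\QQQ$ with the hyperplane containing $\p$ yields a non-trivial quadric in $\EE^d$ that rules $(G,\p)$. Symmetrically, if the base framework $(G,\p')$ in $\EE^d$ were ruled by $\QQQ'$, coning $\QQQ'$ from $\p_0$ gives a quadric in $\EE^{d+1}$ that rules the flat cone. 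Either implication, combined with (i), yields a contradiction via Corollary~\ref{cor:main}.

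The main obstacle is the \emph{sliding} step within slicing. With $\p_0$ placed at the origin, sliding realizes an independent rescaling $\p_i\mapsto t_i\p_i$, which is neither a global affine nor, in general, a single projective transformation of $\EE^{d+1}$; a naive diagonal conjugation $\hat\Omega\mapsto T^{-1}\hat\Omega T^{-1}$ with $T=\mathrm{diag}(1,t_1,\ldots,t_n)$ fails to preserve $\hat\Omega\mathbf{1}=0$ unless $1/t_i$ happens to be an affine function of $\p_i$. I would get around this by invoking the cone-stress correspondence of \cite{Connelly-Gortler}, under which PSD rank-$(n-d-1)$ cone stress data is carried in a form that is manifestly invariant under sliding; and the cone-point argument from the previous paragraph already shows that ruledness of a cone framework depends only on the rays through $\p_0$, hence is preserved under sliding.
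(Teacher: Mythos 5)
Your overall strategy matches the paper's: repackage super stability (via Corollary~\ref{cor:main}, or Lemma~\ref{lem:conar} for cone frameworks) as ``PSD stress of rank $n-d-1$'' plus ``not ruled,'' handle the flat cone $\leftrightarrow$ base passage by adding/deleting a zero row and column of the stress matrix and by coning/sectioning the ruling quadric, and isolate sliding as the hard step. Your observation that $(1,0,\dots,0)^t=\mathbf{1}-(0,1,\dots,1)^t$ lies in the kernel of a flat cone stress is a clean alternative to the paper's force-balance argument. However, both halves of your sliding step have gaps.

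First, the stress half. You correctly diagnose why the naive conjugation fails, but then you only gesture at ``the cone-stress correspondence of \cite{Connelly-Gortler}'' without carrying out an argument; this is the main technical lemma of the whole theorem (the paper's Lemma~\ref{lem:slide}) and cannot be left as a citation in this form. The paper's fix is concrete: delete the cone row and column to get $\Psi$ (which has the same rank as $\Omega$ because the kernel of $\Omega$ contains a vector, namely $\mathbf{1}$, that is nonzero on the cone coordinate), conjugate $\Psi$ by the diagonal scaling matrix to get $\Psi'$ with the slid coordinates in its kernel, then re-augment with the row and column that make the column sums and row sums vanish, and finally use eigenvalue interleaving twice to conclude that the signature $(a,b,c)$ is unchanged. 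You would need to either reproduce this or verify that the cited correspondence really delivers rank \emph{and} PSD-ness after sliding.

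Second, the geometric half. The claim that ``ruledness of a cone framework depends only on the rays through $\p_0$'' does not follow from your cone-point argument. Knowing that every line $\p_0\p_i$ lies in $\hat\QQQ$ puts the slid vertices $\q_i=s_i\p_i$ on $\hat\QQQ$ and covers the slid \emph{cone} edges, but it says nothing about the slid edges $[\q_i,\q_j]$ for $\{ij\}\in E(G)$, which do not lie on the rays. (On the cone $x^2+y^2=z^2$ with apex $\p_0$ at the origin, two points on distinct rulings have a connecting segment that leaves the cone.) The missing ingredient, which the paper supplies in Proposition~\ref{prop:slideinv}, is that for each edge $\{ij\}$ of $G$ the \emph{three} lines of the triangle $\p_0\p_i\p_j$ lie in $\hat\QQQ$, and a quadric containing three lines of a plane not all through one point must contain the whole plane; only then are the slid edges $[\q_i,\q_j]$ captured. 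Relatedly, your assertion that $\p_0$ is a cone point of $\hat\QQQ$ is stronger than what you have shown (you only control lines to the $\p_i$, not to arbitrary points of $\hat\QQQ$); it is also unnecessary, since the non-triviality of the hyperplane section follows directly from the full affine span of $\p$.
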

\begin{corollary}\label{cor:ssinv}
Super stability is invariant with respect to invertible
projective transformations in $\EE^d$ that do not
send any vertices to infinity.
\end{corollary}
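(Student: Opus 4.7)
The cleanest approach transports the certifying equilibrium stress matrix directly, using Corollary \ref{cor:main} to handle the conic-at-infinity condition. Write the projective transformation as $T(\x) = (\bar A \x + b)/(c^t \x + d)$ with $M = \begin{pmatrix}\bar A & b \\ c^t & d\end{pmatrix} \in GL_{d+1}(\RR)$; the hypothesis that no vertex is sent to infinity means each $\lambda_i := c^t \p_i + d$ is nonzero. Let $\Omega$ be the PSD equilibrium stress of rank $n-d-1$ for $(G,\p)$ certifying super stability, put $D := \mathrm{diag}(\lambda_i)$, and set $\Omega' := D \Omega D$.

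I would first verify that $\Omega'$ is a PSD equilibrium stress of rank $n-d-1$ for $(G, T(\p))$. Symmetry, graph support, PSD-ness, and rank $n-d-1$ are immediate from $D$ being invertible and diagonal. The equilibrium property uses that in homogeneous coordinates $\widehat{T(\p_i)} = M \hat \p_i / \lambda_i$, so the matrix of homogeneous coordinates of $T(\p)$ is $\widehat{T(P)} = M \hat P D^{-1}$, and hence $\Omega' \widehat{T(P)}^t = D\Omega D \cdot D^{-1}\hat P^t M^t = D\Omega\hat P^t M^t = 0$, since $\Omega\hat P^t = 0$. Next I would check that the edges of $(G, T(\p))$ do not lie on a conic at infinity: by the above and Corollary \ref{cor:main}, this is equivalent to $(G, T(\p))$ not being ruled. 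But being ruled is manifestly a projective invariant, since any invertible projective transformation of $\EE^d$ extends to an automorphism of $\PP^d$ sending quadrics to quadrics and preserving line-containments; since $(G,\p)$ is super stable it is not ruled, so $(G, T(\p))$ is not ruled either.

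The only nontrivial step is the linear-algebra computation for $\Omega'$; everything else is a checklist verification together with the projective invariance of quadrics and line-containments. An alternative, more in keeping with the coning and slicing theme, would be to realize $T$ as the composition cone $\to$ Euclidean motion of $\EE^{d+1}$ $\to$ slice, each step of which preserves super stability by Theorem \ref{thm:coneslice} (and trivially for the middle step); the main obstacle in that alternative is writing an explicit factorization of $T$ in terms of these three operations and tracking the isometric identification of the sliced hyperplane with $\EE^d$, which is precisely the bookkeeping that the direct stress-transport approach avoids.
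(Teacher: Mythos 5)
Your proof is correct, but it is not the route the paper takes for this corollary. The paper's official proof is a one-liner: factor the projective transformation as coning, followed by a linear transformation of $\EE^{d+1}$, followed by slicing, and invoke Theorem \ref{thm:coneslice} for the outer two steps (plus the easy affine invariance of super stability for the middle one). Your argument is instead the direct stress-transport proof: $\Omega' = D\Omega D$ with $D = \mathrm{diag}(c^t\p_i+d)$ preserves graph support, PSD-ness, rank, and the kernel condition in homogeneous coordinates, and then Corollary \ref{cor:main} converts the conic-at-infinity condition into the projectively invariant condition of being ruled. This is precisely the alternative the authors sketch in their ``Remarks'' subsection, where they cite Connelly--Whiteley for the fact that the rank and signature of stress matrices are preserved by invertible projective maps; you have the advantage of actually carrying out that computation rather than citing it, and of bypassing Theorem \ref{thm:coneslice} entirely, while the paper's route is shorter given that the coning/slicing theorem is already in hand. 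One small point worth a sentence in a write-up: since only the \emph{vertices} are assumed not to go to infinity, an edge segment may cross the hyperplane sent to infinity, so its image need not be a segment; the clean way to see that ``ruled'' is projectively invariant is to note (as in Lemma \ref{lem:line}) that a line meeting a quadric in more than two points lies entirely on it, so being ruled is equivalent to the vertices and the full lines spanned by the edges lying on a quadric, and lines and quadrics are sent to lines and quadrics by an automorphism of $\PP^d$. Your closing aside about realizing $T$ as cone--Euclidean motion--slice is the one place you deviate from what would work: the paper uses a general \emph{linear} map of $\EE^{d+1}$ in the middle, and a Euclidean motion there would not suffice to generate all projective transformations of $\EE^d$; but since you explicitly set that alternative aside, it does not affect your main argument.
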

\begin{proof}
Any projective transformation on $(G,\p)$ can be modeled
by coning, followed by a linear transformation on $\EE^{d+1}$,
followed by slicing.
\end{proof}
\subsection{Proof of Theorem \ref{thm:coneslice}}
We start with two technical lemmas.
\begin{lemma}\label{lem:slide}
Suppose that a cone framework $\p_0*(G,\p)$ has an equilibrium
stress matrix
$\Omega$.  Then any framework $\p_0*(G,\q)$ obtained by
sliding has an equilibrium
stress matrix $\Omega'$ of the same rank and
signature.
\end{lemma}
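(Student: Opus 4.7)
The plan is to produce an explicit invertible matrix $M$ such that $\Omega' := M^t \Omega M$ is an equilibrium stress matrix for the slid framework $\p_0 \ast (G,\q)$; Sylvester's law of inertia then gives that $\Omega$ and $\Omega'$ share rank and signature.

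First I would translate so the cone vertex $\p_0$ sits at the origin, set $\bar\p_i := \p_i - \p_0$ and $\bar\q_i := t_i \bar\p_i$ for $i \neq 0$ (where $t_i \neq 0$ are the scalar sliding parameters along the rays from $\p_0$), and partition $\Omega$ to isolate the cone vertex,
\[
\Omega = \begin{pmatrix} a & b^t \\ b & C \end{pmatrix}.
\]
The stress matrix conditions give $b = -C \mathbf{1}_n$ and $a = \mathbf{1}_n^t C \mathbf{1}_n$, together with $C \bar\p_\ast = 0$, where $\bar\p_\ast$ denotes the row-block indexed by $1, \dots, n$. Set $T := \mathrm{diag}(t_1, \dots, t_n)$, so that $\bar\q_\ast = T \bar\p_\ast$.

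Next I would look for $M$ of the block lower-triangular form $M := \left(\begin{smallmatrix} 1 & 0 \\ \v & T^{-1}\end{smallmatrix}\right)$ with $\v \in \RR^n$ to be chosen. A direct calculation shows the lower-right block of $M^t\Omega M$ is $T^{-1} C T^{-1}$, which inherits the zero pattern of $C$ (diagonal rescaling preserves zeros) and satisfies $T^{-1}CT^{-1} \bar\q_\ast = T^{-1} C \bar\p_\ast = 0$. The spoke block of $M^t \Omega M$ is $T^{-1}(b + C \v)$; requiring the new matrix to have its lower $n$ row sums equal to zero forces $b + C \v = -C T^{-1} \mathbf{1}_n$, which, using $b = -C\mathbf{1}_n$, simplifies to $C \v = C(\mathbf{1}_n - T^{-1} \mathbf{1}_n)$. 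The natural solution is $\v := \mathbf{1}_n - T^{-1} \mathbf{1}_n$, and a short expansion confirms that the $(0,0)$ entry $a + 2\v^t b + \v^t C \v$ then collapses to $\mathbf{1}_n^t T^{-1} C T^{-1} \mathbf{1}_n$, exactly the value needed for the top row to also sum to zero.

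With this $\v$, it only remains to verify that $\Omega' := M^t \Omega M$ really is an equilibrium stress matrix for $\p_0 \ast (G,\q)$: graph support is automatic (the cone graph contains every spoke, and the non-spoke block $T^{-1} C T^{-1}$ shares the zero pattern of $C$); the all-ones vector is in the kernel by the block row-sum identities imposed above; and $\Omega' \bar\q = 0$ follows from $C \bar\p_\ast = 0$ combined with the spoke identity. Since each $t_i \neq 0$, the matrix $M$ is invertible, so $\Omega$ and $\Omega'$ are congruent and therefore agree in rank and signature. The main point requiring care is the choice of $\v$: a naive diagonal conjugation by $\mathrm{diag}(1, T^{-1})$ does not yield a stress matrix because the all-ones vector need no longer lie in the kernel, and it is precisely the row-sum identity $b = -C\mathbf{1}_n$, together with the freedom provided by the spokes at the cone vertex, that makes the off-diagonal correction $\v$ solvable.
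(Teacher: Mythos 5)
Your proof is correct, and I checked the algebra: with $\v=\mathbf 1_n-T^{-1}\mathbf 1_n$ the blocks of $M^t\Omega M$ come out to $C'=T^{-1}CT^{-1}$, $b'=-T^{-1}CT^{-1}\mathbf 1_n$, $a'=\mathbf 1_n^tT^{-1}CT^{-1}\mathbf 1_n$, which indeed has the all-ones vector and the coordinate vectors of $\p_0*(G,\q)$ in its kernel and respects the support of the cone graph. Interestingly, this is exactly the matrix $\Omega'$ that the paper constructs, but by a different route: the paper deletes the cone row and column to get $\Psi$, rescales to $\Psi'=S\Psi S$, and then re-borders so that the all-ones vector is in the kernel, establishing preservation of rank and signature by applying the Cauchy eigenvalue interlacing theorem twice (once for the deletion, once for the bordering), using the observation that a kernel vector of $\Omega$ with nonzero cone coordinate forces $\mathrm{rank}\,\Psi=\mathrm{rank}\,\Omega$. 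You instead realize the delete--rescale--reborder operation as a single explicit congruence $\Omega'=M^t\Omega M$ by a block lower-triangular invertible $M$, so Sylvester's law of inertia gives the rank and signature statement in one stroke. Your version buys a cleaner and more self-contained inertia argument (no interlacing needed), at the cost of having to solve for the correction vector $\v$; the paper's version avoids that computation but needs the slightly more delicate rank-preservation observation to make interlacing pin down the signature. Both are complete proofs.
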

The proof is based on ideas of~\cite[Theorem 8]{lovasz2001steinitz}.
See also~\cite[Lemma 4.11]{laurent2014positive}.
\begin{proof}
By translating, we can assume that $\p_0$ is at the origin,
and that sliding is then represented by  scaling:
$\q_i := s_i \p_i$.

Let $\Omega$ be an equilibrium stress matrix for
$\p_0*(G,\p)$
with signature $(a,b,c)$ ($a$ negative eigenvalues,
$b$ zero eigenvalues and $c$ positive eigenvalues).
Let $\Psi$ be the matrix obtained from $\Omega$ by removing
the row and column corresponding to the cone vertex.
Let it have signature $(f,e,g)$.
Because the kernel of
$\Omega$ contains a vector that is non-zero on the coordinate
corresponding to the cone vertex
(such as
the all-ones vector), we know that
the rank of $\Psi$ is the same as the rank of $\Omega$,
thus it
has one less zero eigenvalue. Then from the eigenvalue interleaving
theorem, we must have $f=a$ and $g=c$.

Since $\p_0$ is at the origin, we must still have the $d+1$
coordinate vectors of $\p$ in the kernel of $\Psi$, though the all-ones
vector is no longer in the kernel.

When scaling $\p$ to obtain $\q$, we can define the matrix
$\Psi' := S \Psi S$, where $S$ is a full rank diagonal matrix
with entries $1/s_i$. The matrix $\Psi'$ will have the same
signature as $\Psi$ and will have the coordinates of $\q$ in
its kernel.

Finally we will augment $\Psi'$ with a row and column corresponding
to the cone vertex so that the all-ones vector is in the kernel of the
resulting stress $\Omega'$. To do this, we can first add a column
which is the negative sum of the $n$ columns of $\Psi'$.
Since $\p_0$ is at the origin, the coordinates of
$\p_0*(G,\q)$ are in the kernel of this matrix.
Then we can add a row which is the negative sum of the $n$ rows
of this intermediate matrix.
The coordinates of
$\p_0*(G,\q)$ must also be annihilated by this last row
(as it is just a linear combination of the other rows).
Thus we have an equilibrium stress matrix for
$\p_0*(G,\q)$.
The rank of $\Omega'$ is the same as $\Psi'$
thus it
has one more zero eigenvalue. Again,  from the eigenvalue interleaving
theorem, it must have signature $(a,b,c)$.
\end{proof}

\begin{lemma}
\label{lem:conar}
A cone framework $\p_0*(G,\p)$ has its edge directions on a conic at infinity
if and only if it is ruled.
\end{lemma}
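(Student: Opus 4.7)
The plan is to prove the two directions separately, with only the forward direction requiring the cone structure. For the ``ruled implies conic'' direction, I would just reuse the observation already invoked in the proof of Theorem~\ref{thm:main2}: if $(G,\p_0*\p)$ sits on a quadric $\QQQ$ with quadratic part $\Q$, then for every edge both endpoints satisfy the polynomial $Q(\x)=\x^t\Q\x+\l^t\x+c$, and a quick expansion shows the edge vector $\e_{ij}$ satisfies $\e_{ij}^t\Q\e_{ij}=0$ (equivalently, the projective line through $\p_i$ and $\p_j$ hits $\QQQ$ in at least two points so its point at infinity is also on $\QQQ$). This half works for any framework, not just cones.

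The ``conic implies ruled'' direction is where the apex $\p_0$ enters. I would first translate so that $\p_0$ lies at the origin. Then every cone edge $\{\p_0,\p_i\}$ has edge vector equal to $\p_i$ itself, so the hypothesis $\e^t\Q\e=0$ applied to these cone edges gives $\p_i^t\Q\p_i=0$ for every $i$. This says that each $\p_i$ (and trivially $\p_0=\0$) lies on the homogeneous quadric $\QQQ:=\{\x:\x^t\Q\x=0\}$.

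Having placed all the vertices on $\QQQ$, I would then apply Lemma~\ref{lem:line} edge by edge. For a cone edge $\{\p_0,\p_i\}$, the endpoints $\0$ and $\p_i$ lie on $\QQQ$ and the edge vector $\p_i$ satisfies $\p_i^t\Q\p_i=0$, so Lemma~\ref{lem:line} puts the whole cone edge on $\QQQ$. For a non-cone edge $\{i,j\}$, both endpoints are already on $\QQQ$ and the edge vector satisfies the conic condition by hypothesis, so Lemma~\ref{lem:line} again puts the entire edge on $\QQQ$. Hence the cone framework is ruled.

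There is no real obstacle, but a small bookkeeping check is required: to qualify as a ruling quadric in the sense of the paper's definition, $\QQQ$ must be non-trivial and have full $(d+1)$-dimensional affine span. Non-triviality is immediate because $\Q$ is non-zero (the conic at infinity is assumed non-trivial), and full affine span follows because $\QQQ$ contains the vertices of $\p_0*(G,\p)$, which by the standing assumption already has full affine span in $\EE^{d+1}$. The only subtlety worth flagging is that one must work after translation so that $\p_0=\0$; the conclusion that the framework is ruled is then a statement about an inhomogeneous quadric in the original coordinates, which is fine since being ruled is coordinate-independent.
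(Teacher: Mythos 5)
Your proof is correct, but it takes a genuinely different route from the paper's. The paper's proof of Lemma~\ref{lem:conar} is a two-line reduction: since $\p_0$ is adjacent to every other vertex, its inclusive neighborhood is the whole vertex set, so neighborhood affine preequivalence already forces a global affine map; hence every cone framework is automatically neighborhood affine rigid, and Theorem~\ref{thm:main2} applies verbatim. You instead give a direct, self-contained argument: after translating $\p_0$ to the origin, the conic condition on the cone edges alone places every vertex on the \emph{homogeneous} quadric $\{\x : \x^t\Q\x = 0\}$, and Lemma~\ref{lem:line} then sweeps each edge onto it. Both are sound (your bookkeeping on non-triviality and full affine span of $\QQQ$ is the right check to make, and the forward direction is the same in both). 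What the paper's route buys is conceptual economy — it exhibits cone frameworks as the simplest instance of the neighborhood-affine-rigidity machinery, with no new computation. What your route buys is the stronger conclusion that the ruling quadric can be taken to be a cone with apex $\p_0$; this is exactly the fact needed in the proof of Proposition~\ref{prop:slideinv} (there the paper has to re-derive it by arguing that the plane of each triangle $\{\p_0,\p_i,\p_j\}$ lies in $\QQQ$), so your version would slightly streamline that later argument.
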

\begin{proof}
Since $\p_0$ is connected to all other vertices, this
automatically makes the framework neighborhood affine rigid.
Then we can apply Theorem~\ref{thm:main2}.
\end{proof}
We can now prove an intermediate result that is interesting in
its own right.
\begin{proposition}\label{prop:slideinv}
Super stability of a cone framework $\p_0*(G,\p)$ is invariant with respect to
sliding.
\end{proposition}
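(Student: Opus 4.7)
The plan is to check each of the three ingredients of super stability separately for the slid framework $\p_0*(G,\q)$. After translating so that $\p_0 = \0$, sliding becomes the scaling $\q_i = s_i \p_i$ with $s_i \ne 0$ for every $i$ (since sliding avoids $\p_0$ itself).

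The PSD equilibrium stress matrix of rank $n-d-1$ transfers directly: Lemma~\ref{lem:slide} produces an equilibrium stress $\Omega'$ for $\p_0*(G,\q)$ with the same rank and signature as $\Omega$, so the PSD property and the rank are both preserved. Full $(d+1)$-dimensional affine span is also immediate: with $\p_0 = \0$, full affine span of $\p_0*(G,\p)$ is equivalent to the $\p_i$ spanning $\RR^{d+1}$ linearly, and multiplying each $\p_i$ by a nonzero scalar $s_i$ preserves linear independence.

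The nontrivial step is to show that the edge directions of $\p_0*(G,\q)$ do not lie on a conic at infinity. I would argue by contrapositive. Suppose some nonzero symmetric $\Q$ satisfies $\e^t\Q\e = 0$ for every edge $\e$ of $\p_0*(G,\q)$. Evaluating on the cone edges $\q_i - \p_0 = s_i\p_i$ and dividing by $s_i^2$ gives $\p_i^t \Q \p_i = 0$ for every $i$. Evaluating on a $G$-edge $\q_j - \q_i = s_j\p_j - s_i\p_i$ and using the diagonal identities just obtained forces $-2 s_i s_j \p_i^t \Q \p_j = 0$, and hence $\p_i^t \Q \p_j = 0$ for every edge $\{ij\}$ of $G$. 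Substituting back into $(\p_i - \p_0)^t \Q (\p_i - \p_0)$ and $(\p_j - \p_i)^t \Q (\p_j - \p_i)$ shows that the same $\Q$ certifies a conic at infinity for $\p_0*(G,\p)$, contradicting its super stability.

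The main obstacle is the bookkeeping in this last step; it works cleanly because the cone edges let us first pin down the diagonal identities $\p_i^t \Q \p_i = 0$, after which the $G$-edge expansion collapses to a pure cross-term, which is where nonvanishing of the $s_i$ is used. One could instead argue geometrically via Lemma~\ref{lem:conar}, converting ``conic at infinity'' into ``ruled'' and chasing the quadric through $\p_0$ back to $\p$, but verifying that the original $G$-edges lie in the quadric reduces to essentially the same identity, so the direct algebraic route is cleanest.
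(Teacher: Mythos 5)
Your proof is correct, and for the key step it takes a genuinely different route from the paper. The stress-matrix part is identical (both invoke Lemma~\ref{lem:slide}), but for the conic-at-infinity part the paper goes through Lemma~\ref{lem:conar}: a cone framework is automatically neighborhood affine rigid, so by Theorem~\ref{thm:main2} its edge directions lie on a conic at infinity iff it is ruled on a quadric $\QQQ$; one then observes that the supporting plane of each triangle $\{\p_0,\p_i,\p_j\}$ lies entirely in $\QQQ$, so sliding (which moves each $\p_i$ along a line through $\p_0$ inside such a plane) preserves being ruled. You instead verify directly that the conic condition itself is slide-invariant: with $\p_0$ at the origin, the cone edges force the diagonal identities $\p_i^t\Q\p_i=0$, the $G$-edges then collapse to the cross-terms $\p_i^t\Q\p_j=0$, and both conditions are manifestly invariant under rescaling $\p_i\mapsto s_i\p_i$ with $s_i\neq 0$. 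Your computation is essentially the observation (which also underlies the Godsil/Alfakih SAP characterizations quoted in Section~\ref{sec:arnold}) that for a cone framework the conic condition is equivalent to $\hat\Q$ annihilating the homogenized points and edges, a form in which scale-invariance is obvious. What each approach buys: the paper's argument exercises its main theorem and the ruled-quadric picture, which is the conceptual point of the paper and is reused elsewhere (e.g.\ in the slicing direction of Theorem~\ref{thm:coneslice}); yours is more elementary and entirely self-contained, needing neither Theorem~\ref{thm:main2} nor Lemma~\ref{lem:conar}. You also explicitly check preservation of the full affine span, which the paper leaves implicit. One very minor remark: you prove that a conic for the slid framework yields one for the original; the converse direction of ``invariance'' follows because the original is itself obtained from the slid framework by sliding, which is worth a half-sentence.
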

\begin{proof}
If $\p_0*(G,\p)$ has its edge directions on a conic at infinity, then
from Lemma~\ref{lem:conar},
it is
ruled on a quadric $\QQQ$.  For each edge $\{i,j\}\in E$, the three
edges of
the triangle
$\{\p_0, \p_i,\p_j\}$ are contained in $\QQQ$, so its entire supporting
plane must be too.
So any
$\p_0*(G,\q)$ obtained by sliding is also ruled.  Thus, for cone
frameworks, having edge directions
on a conic at infinity is invariant with respect to sliding.
Lemma \ref{lem:slide} says that having a PSD equilibrium
stress matrix of rank
$n - d -1$ is as well.
\end{proof}

We can now complete the proof of Theorem \ref{thm:coneslice}.
The main
observation is that
for both $(G,\p)$ in $\EE^d$ and
$\p_0*(G,\p)$ in $\EE^{d+1}$, the necessary rank for
super stability is $n-d-1$.
Starting with an equilibrium stress for $(G,\p)$ in $\EE^d$,
we can simply add a row and column of zeros and obtain
an
equilibrium stress matrix for  its coned framework  in $\EE^{d+1}$
of the same rank and positive/negative signature.

Conversely,
for a flat cone framework $\p_0*(G,\p)$ in $\EE^{d+1}$,
any equilibrium
stress $\Omega$,
must have  $\Omega_{0i}=0$ for all $i$.
(The equilibrium condition can be thought of as a balance of
forces along the edges of the framework,
$\sum_{j\neq i} \Omega_{ij} (\p_i-\p_j) =\0$.
Any non-zero force along the cone edge at $\p_i$ cannot be
matched by the forces arising from edges within $G$  as these forces
all lie in a
hyperplane not containing $\p_0$).
By simply discarding the  row and column for $\p_0$,
we obtain
an
equilibrium stress matrix for  $(G,\p)$ in $\EE^d$
of the same rank and positive/negative signature.

Proceeding
in the coning (easy) direction,
if $(G,\p)$
has a
PSD equilibrium
stress matrix of maximum rank, then from the above observation,
so too must the result of coning.
Meanwhile, if the edge directions of
$(G,\p)$ are not on a conic at infinity, then
neither are the edges of the coned result.

Proceeding
in the slicing (harder) direction,
in light of
Proposition \ref{prop:slideinv},
we can start with
a flat coned framework $\p_0*(G,\p)$
that is super stable.
From the observation above, if $\p_0*(G,\p)$
has a
PSD equilibrium
stress matrix of maximum rank, then
so too must the sliced result $(G,\p)$ in $\EE^d$.

Meanwhile, if the edge directions of
$\p_0*(G,\p)$
are not at a conic at infinity,
then $\p_0*(G,\p)$ is certainly not ruled.
Importantly, this implies that $(G,\p)$ in $\EE^d$
is not ruled either.
From Corollary~\ref{cor:main} then, $(G,\p)$
does not have its edge directions
on a conic at infinity. Thus it is super stable. \qed

\subsection{Remarks}
Lemma \ref{lem:slide} is implicit in
\cite{coning}.  Also, we can prove Corollary \ref{cor:ssinv}
without Theorem \ref{thm:coneslice} using Corollary~\ref{cor:main},
the fact that being ruled is preserved under projective transforms,
and a result from \cite{coning} that the rank and signature of
stress matrices is preserved by invertible projective maps.

Another connection is to the notion of dimensional rigidity,
which was introduced by Alfakih \cite{Alfakih-dim-rigidity}.
A framework $(G,\p)$ in $\EE^d$ is \textit{dimensionally rigid}
if there are no equivalent frameworks with a higher dimensional
span. Alfakih \cite{Alfakih-fields} has shown that
$(G,\p)$ is dimensionally rigid but not universally rigid
if and only if its edge directions are on a conic at infinity.
Connelly and Gortler
\cite{Connelly-Gortler,connelly2015universal} showed that
dimensional rigidity is invariant with respect to
projective transformations and coning/sliding/slicing.

Universal rigidity is preserved under coning and sliding,
but it is neither projectively invariant nor is it
preserved by slicing, since having edge directions on a
conic at infinity isn't preserved by projective transforms
or by coning~\cite{Connelly-Gortler}.
The counter-examples are necessarily not neighborhood affine rigid,
and not ruled. In contrast, being ruled is invariant with
respect to invertible projective transforms and to coning.

\section{Strong Arnold Property}\label{sec:arnold}

In the literature on the Colin de Verdière graph parameter,
there is an central non-degeneracy
property of a matrix called the Strong Arnold Property~\cite{colin}.

\begin{definition}
Let $r$ be some rank.
Let $D_r$ be the determinantal variety of matrices in $\SSS^n$
with rank no greater than $r$.

A graph supported matrix, $\Psi \in C(G)$, with rank $r$
is said to satisfy the \emph{Strong Arnold
Property (SAP)} if $D_r$ and $C(G)$ intersect transversely
at $\Psi$.
\end{definition}

Recently, Laurent and Varvisiotis \cite{laurent2014positive}
began an exploration on the
relationship between universal rigidity, PSD matrix completion and
the Strong Arnold Property.  They quote an older result of Godsil,
which we translate into our language and specialize to stress matrices.
\begin{theorem}[{\cite[Theorem 3.2]{god}}]
\label{thm:vl}
Let $\Omega$ be a stress matrix with rank $n-d-1$. Let
$(G,\p)$ be a framework in its kernel with a $d$-dimensional affine span.
Then $\Omega$ does not have the SAP if and only if  $(G,\p)$ is  ruled.
\end{theorem}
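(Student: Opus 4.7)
The plan is to reinterpret the SAP via orthogonal complements of tangent spaces under the trace inner product $\langle A,B\rangle = \operatorname{tr}(AB)$ on $\SSS^n$, and then translate the resulting algebraic condition into a geometric statement about a quadric containing the framework. Since $\Omega$ has rank exactly $r = n-d-1$, it is a smooth point of $D_r$, and the standard description of tangents to determinantal varieties gives
\[
(T_\Omega D_r)^\perp \;=\; \{N\in \SSS^n : N\Omega = 0\} \;=\; \{N \in \SSS^n : \operatorname{im}(N)\subseteq\ker\Omega\}.
\]
Meanwhile $C(G)^\perp$ consists of the symmetric matrices with $N_{ii}=0$ for all $i$ and $N_{ij}=0$ for every edge $\{i,j\}$. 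So SAP fails at $\Omega$ exactly when there is a non-zero $N \in \SSS^n$ satisfying both conditions.

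Next I would parameterize the first condition. Let $P$ be the $n\times(d+1)$ matrix whose $i$-th row is $\hat\p_i^t$. Since $\p$ has full $d$-dimensional affine span, $P$ has full column rank, and its column span is exactly $\ker\Omega$. A dimension count (both sides have dimension $\binom{d+2}{2}$) shows that the symmetric $N$ with $\operatorname{im}(N)\subseteq\operatorname{im}(P)$ are precisely those of the form $N = P\hat\Q P^t$ for a unique symmetric $(d+1)\times(d+1)$ matrix $\hat\Q$; in entries, $N_{ij} = \hat\p_i^t \hat\Q \hat\p_j$.

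Under this parametrization, the conditions $N_{ii}=0$ and $N_{ij}=0$ on edges become $\hat\p_i^t \hat\Q \hat\p_i = 0$ at every vertex and $\hat\p_i^t \hat\Q \hat\p_j = 0$ at every edge. The first family says every vertex of $\p$ lies on the (possibly degenerate) quadric $\QQQ$ in $\EE^d$ defined by $\hat\Q$ in homogeneous coordinates; combined with the edge equations, for $\hat\x = (1-t)\hat\p_i + t\hat\p_j$ one computes
\[
\hat\x^t \hat\Q \hat\x = (1-t)^2\,\hat\p_i^t\hat\Q\hat\p_i + 2t(1-t)\,\hat\p_i^t\hat\Q\hat\p_j + t^2\,\hat\p_j^t\hat\Q\hat\p_j = 0,
\]
so every point of every edge lies on $\QQQ$, which is exactly the ruled condition. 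The construction is reversible: if $(G,\p)$ is ruled on a quadric with non-zero defining matrix $\hat\Q$, then $N := P\hat\Q P^t$ is a non-zero element of $(T_\Omega D_r)^\perp \cap C(G)^\perp$ (non-zero by the injectivity of $\hat\Q\mapsto P\hat\Q P^t$, using that $P$ has full column rank), witnessing the failure of SAP.

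The main obstacle I expect is verifying the normal-space description $(T_\Omega D_r)^\perp = \{N : N\Omega = 0\}$ at the smooth point $\Omega$; this follows from the classical realization of $T_\Omega D_r$ as the symmetric matrices that preserve $\ker\Omega$ to first order, but it is the technical hinge of the argument. A secondary point to watch is that one must take $\hat\Q$ to be the full $(d+1)\times(d+1)$ homogeneous matrix of $\QQQ$ (not just its quadratic part), so that both the vertex and edge conditions line up correctly with the off-diagonal and diagonal vanishing of $N$. Everything else is a clean algebraic repackaging of the definition of \emph{ruled}.
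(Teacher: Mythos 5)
The paper does not prove this theorem itself---it is quoted from Godsil---but the Remark following it sketches exactly the argument you give: non-transversality is equivalent to the existence of a non-zero symmetric $\hat{\Q}$ with $\hat{\p}_i^t\hat{\Q}\hat{\p}_i=0$ at vertices and $\hat{\p}_i^t\hat{\Q}\hat{\p}_j=0$ on edges, which is then read off as a ruling quadric. Your write-up is a correct and complete filling-in of that same approach: the normal-space description $(T_\Omega D_r)^\perp=\{N: N\Omega=0\}$ at a rank-exactly-$r$ point, the identification $\ker\Omega=\operatorname{im}(P)$ from the rank hypothesis and the full affine span, and the dimension-count parametrization $N=P\hat{\Q}P^t$ are all standard and correctly deployed.
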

\begin{remark}
Godsil's proof of Theorem \ref{thm:vl} provides another approach to
proving Lemma \ref{lem:line}.  He shows that the conditions
for a non-transverse intersection at $\Omega$ are equivalent
to the existence of a non-zero
symmetric $(d+1)\times (d+1)$ matrix $\hat{\Q}$ such that
for all vertices $i$,
we have $\hat{\p}_i^t \hat{\Q} \hat{\p}_i=0$ and that for all
edges $\{ij\}$, we have $\hat{\p}_i^t \hat{\Q} \hat{\p}_j=0$
(where $\hat{\p_i}$ are as in Remark \ref{rem:term}).
Then
$\hat{\Q}$ is the matrix defining the ruling quadric $\QQQ$.
\end{remark}
Other closely related results can be found in \cite{van2002graphs,laurent2014positive}.

Meanwhile, Alfakih proves the following:
\begin{theorem}[{\cite[Corollary 2]{spec}}]
\label{thm:alf2}
Let $\Omega$ be a stress matrix with rank $n-d-1$. Let
$(G,\p)$ be a framework in its kernel with a $d$-dimensional affine span.
Then $\Omega$ does not have the SAP
if and only if the edge directions of  $(G,\p)$ are on a conic at infinity.
\end{theorem}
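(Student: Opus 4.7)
The plan is to observe that Theorem \ref{thm:alf2} falls out immediately by chaining together two results already available in the paper: Godsil's theorem (Theorem \ref{thm:vl}) and Corollary \ref{cor:main}. The hypotheses of both are identical to the hypotheses of Theorem \ref{thm:alf2}, namely that $\Omega$ is a stress matrix of rank exactly $n-d-1$ with $(G,\p)$ in its kernel and $\p$ of full $d$-dimensional affine span. So the strategy is simply to use \emph{being ruled} as the bridge property.

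More concretely, I would argue as follows. First, apply Theorem \ref{thm:vl} to translate the matrix-theoretic condition into a geometric one: $\Omega$ fails to satisfy the Strong Arnold Property if and only if $(G,\p)$ is ruled. Second, apply Corollary \ref{cor:main} to translate ``ruled'' into the conic-at-infinity condition: since $(G,\p)$ has an equilibrium stress matrix of rank $n-d-1$ (namely $\Omega$ itself), its edge directions lie on a conic at infinity of $\EE^d$ if and only if it is ruled. Composing the two equivalences gives: $\Omega$ does not have SAP $\iff$ $(G,\p)$ is ruled $\iff$ the edge directions of $(G,\p)$ lie on a conic at infinity, which is exactly the statement of Theorem \ref{thm:alf2}.

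There really is no hard step here, once the framework of the paper is in place; the proof is effectively a one-line composition. The only thing to be careful about is matching hypotheses. In particular, Corollary \ref{cor:main} requires the existence of \emph{some} stress matrix of rank $n-d-1$, and we are handed exactly such an $\Omega$, so it applies directly. Similarly, Theorem \ref{thm:vl} is stated in terms of a stress matrix $\Omega$ of rank $n-d-1$ together with a framework in its kernel of full affine span, which again matches our setup verbatim. Thus the main conceptual work for this theorem has already been done: the heavy lifting lies in Corollary \ref{cor:main} (established via Theorem \ref{thm:main2}), which in turn uses the perturbation-map construction from Proposition \ref{prop:ae} and the ruled-geometry analysis of Proposition \ref{prop:pruled}. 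Theorem \ref{thm:alf2} itself should be written essentially as a corollary of these two earlier results.
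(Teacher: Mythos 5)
Your proposal is correct and is exactly the derivation the paper itself endorses: the paper remarks that Corollary~\ref{cor:main}, Theorem~\ref{thm:vl}, and Theorem~\ref{thm:alf2} form a cycle in which any two imply the third, and your argument instantiates the relevant direction of that cycle, with the hypotheses of Theorem~\ref{thm:vl} and Corollary~\ref{cor:main} matching those of Theorem~\ref{thm:alf2} as you check. (The paper also cites Alfakih's original, more computational proof, but your composition via ``ruled'' is the unification the paper is advocating.)
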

\begin{remark}
Similarly to the proof of Theorem~\ref{thm:vl},
Alfakih's proof of Theorem \ref{thm:alf2} also (implicitly) involves
the existence of a non-zero
symmetric $(d+1)\times (d+1)$ matrix $\hat{\Q}$ such that
for all $i$, we have $\hat{\p}_i^t \hat{\Q} \hat{\p}_i=0$ and that for all
edges $\{ij\}$ we have
$\hat{\p}_i^t \hat{\Q} \hat{\p}_j=0$.
But
in this case, a fair amount of extra work is needed
to get to this condition starting from the assumed $\Omega$ and
assumed conic at infinity.
\end{remark}

We can  now see that Corollary~\ref{cor:main}, Theorem~\ref{thm:vl}
and Theorem~\ref{thm:alf2} form a cycle of relationships. Any two
of them imply the third.

\begin{figure}[h]
\begin{center}
\includegraphics[width=.2\textwidth]{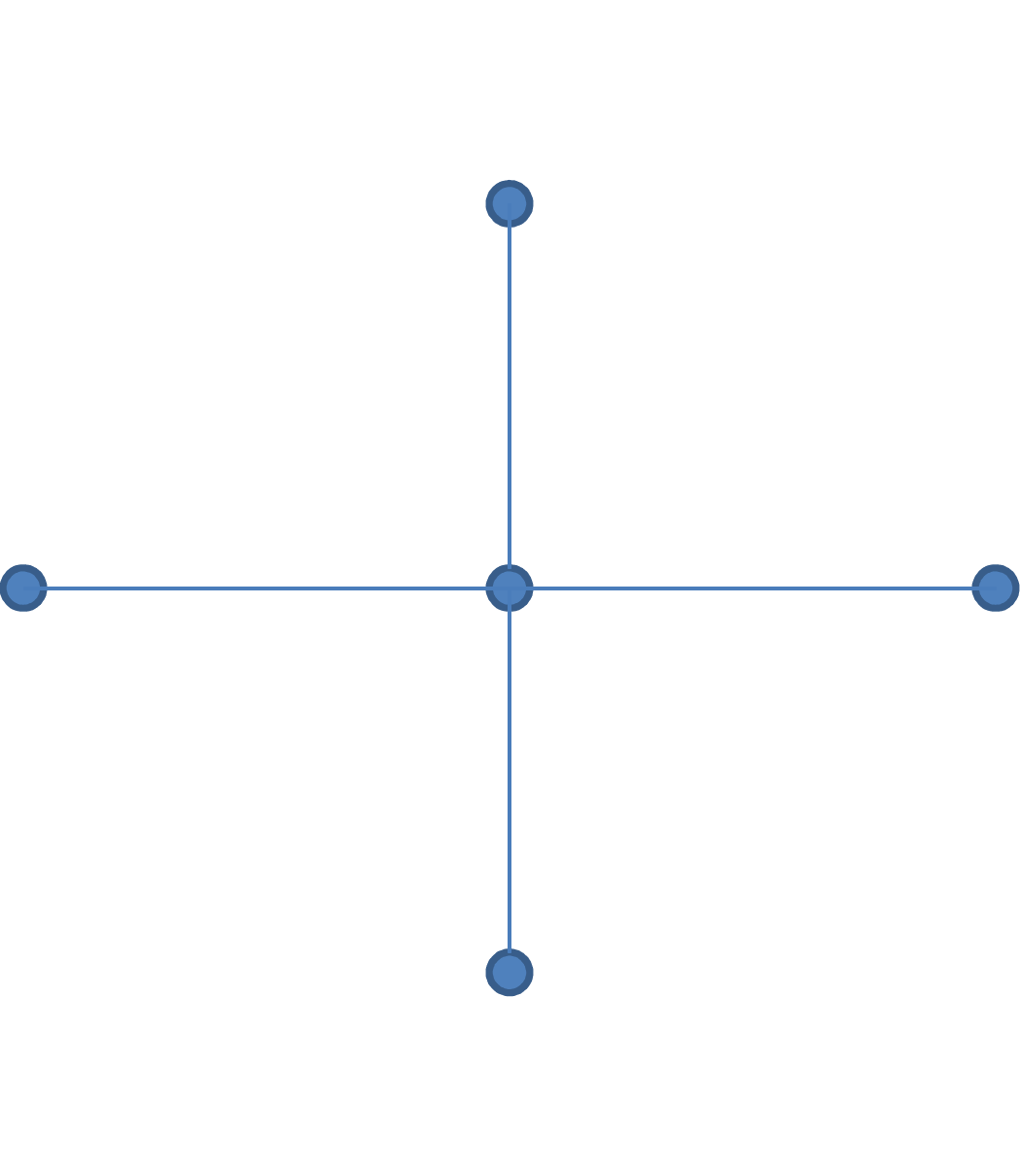}%
\hspace{1 in}
\includegraphics[width=.2\textwidth]{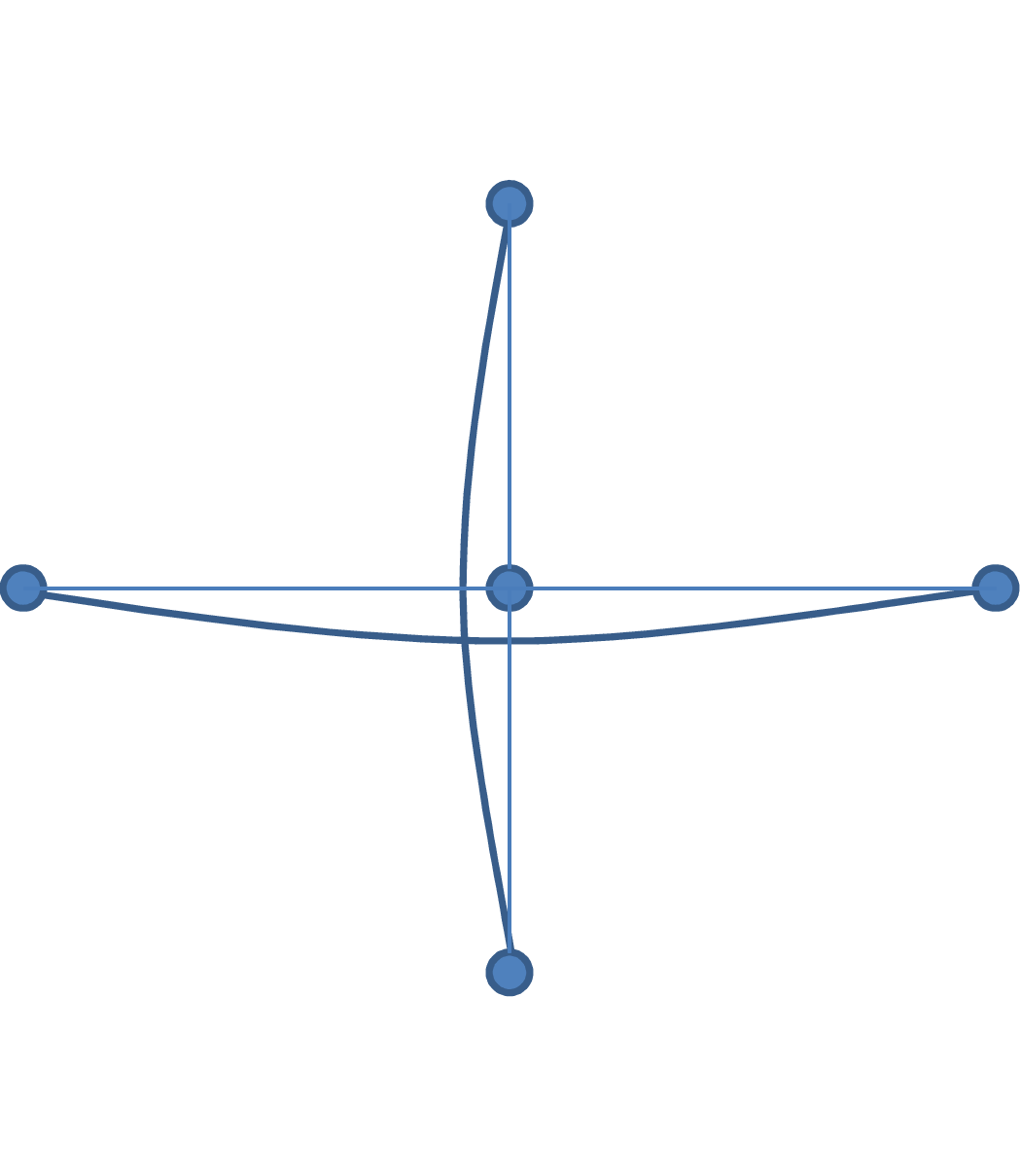}%
\end{center}
\caption{(Left) A ruled framework on two lines
that is neighborhood affine
rigid.
(Right) After adding two long bracing edges, the ruled framework
has
a PSD equilibrium stress matrix of rank $n-d-1$.
}
\label{fig:ex}
\end{figure}

\section{Examples}

\begin{figure}[h]
\begin{center}
\includegraphics[width=.15\textwidth]{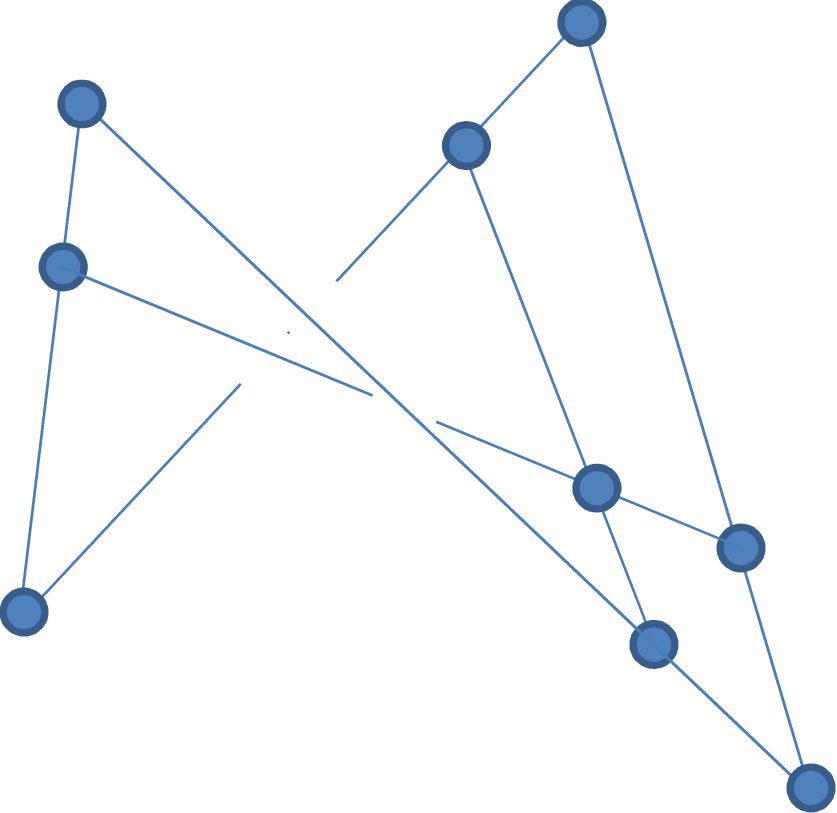}%
\hspace{.4 in}
\includegraphics[width=.2\textwidth]{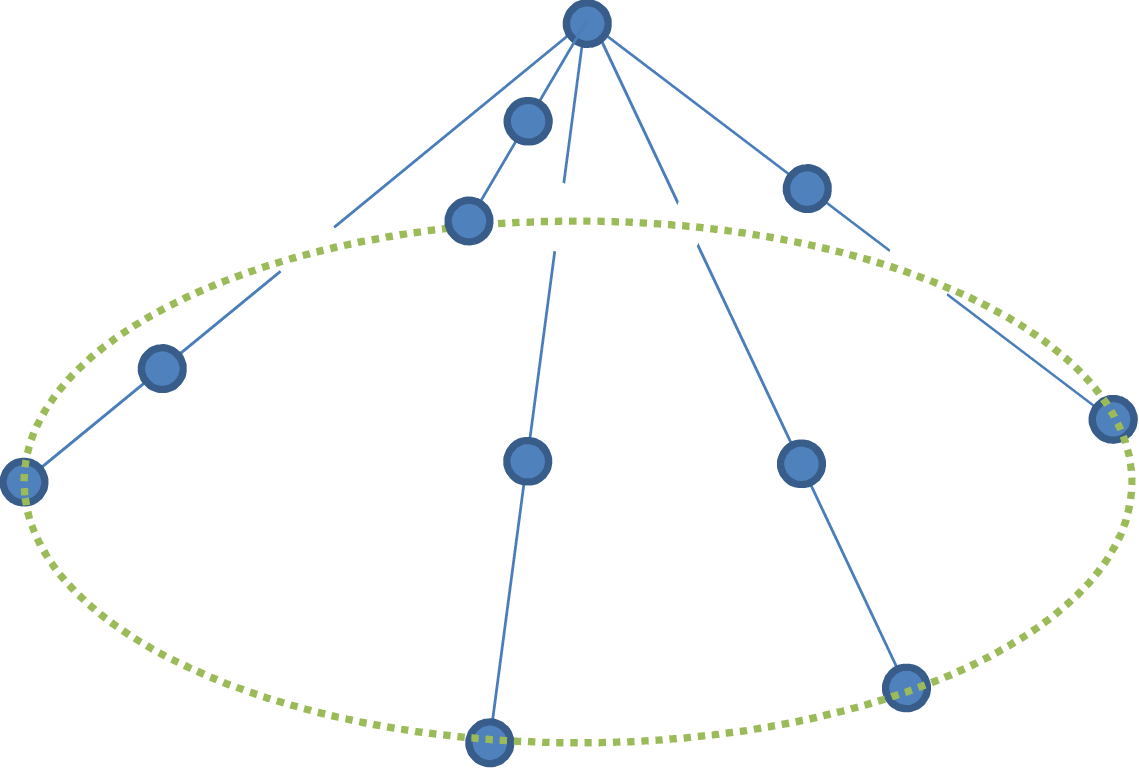}%
\hspace{.4 in}
\includegraphics[width=.2\textwidth]{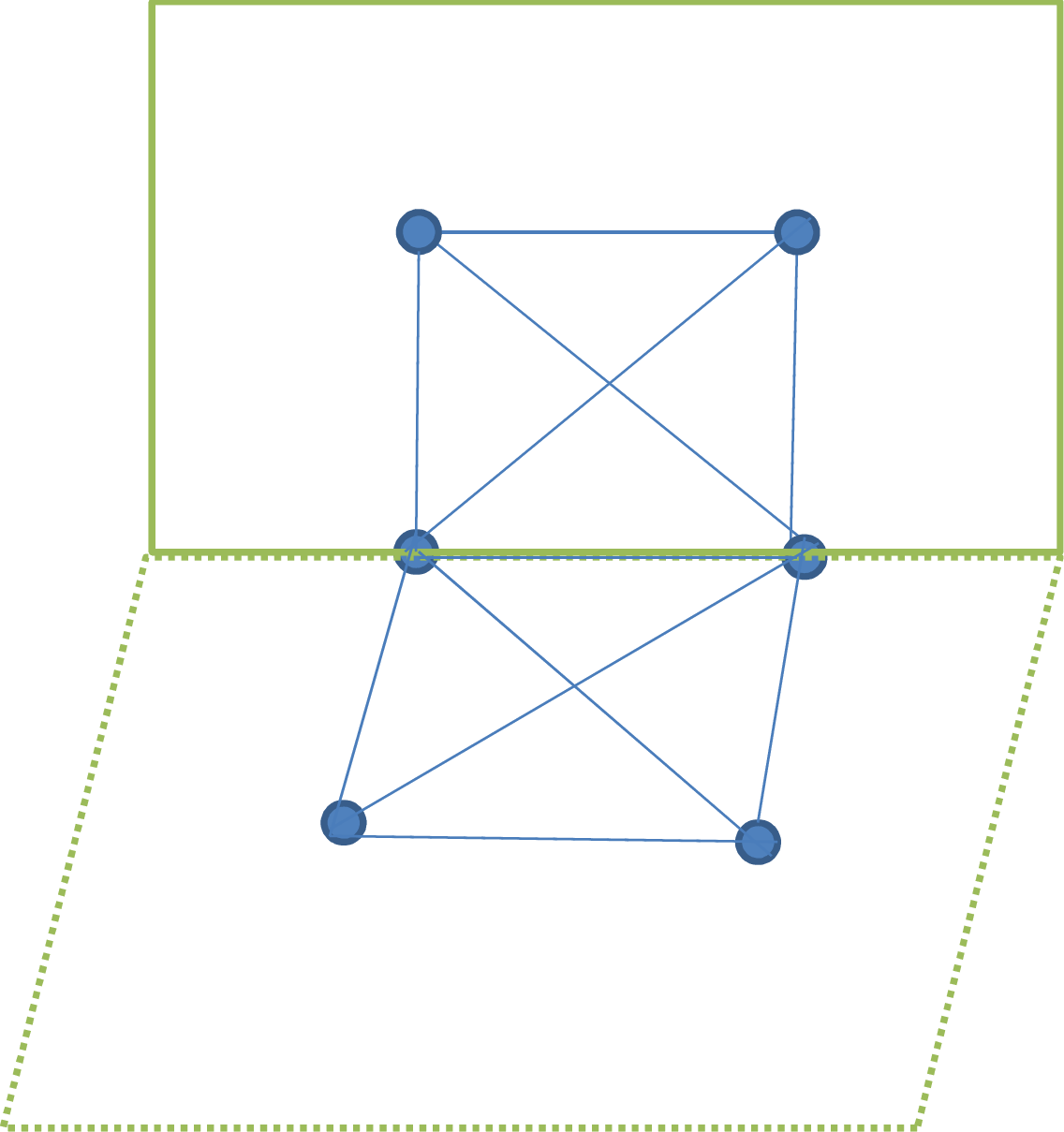}%
\end{center}
\caption{
(Left)
A ruled framework on a hyperbolic paraboloid
that is neighborhood affine
rigid. By adding six long bracing edges along the rulings,
as in Figure~\ref{fig:ex}, it
will have a PSD equilibrium stress matrix
of rank $n-d-1$.
(Middle)
A ruled framework on an elliptical cone
that is neighborhood affine
rigid.
It has one vertex at a cone point.
Even with bracing edges along the rulings, it will not
have an equilibrium stress matrix of rank $n-d-1$.
The green ellipse is added for visualization context only.
(Right)
A ruled framework on two planes
that is neighborhood affine
rigid and has
a PSD equilibrium stress matrix of rank $n-d-1$.
It has two vertices at cone points, along the intersection line of the two
planes.
The green planes are added for visualization context only.
}
\label{fig:3d}
\end{figure}
In $\EE^2$ a ruled framework must lie in the intersection of two
lines. See Figure~\ref{fig:ex}.

What do neighborhood affine rigid, ruled frameworks in
$\EE^3$ look like? See Figure~\ref{fig:3d}.

One possibility in $\EE^3$ is a framework on a doubly ruled
quadric such as a hyperbolic paraboloid or
a hyperboloid of one sheet (smooth, rank $4$).
In this case, each vertex can have at most a two
dimensional neighborhood affine span.

\begin{proposition}
With an appropriate vertex and edge set,
we can construct a framework on any doubly ruled quadric
$\QQQ$
in
$\EE^3$ that
has a PSD equilibrium stress matrix of rank $n-4$
(and thus is also neighborhood affine rigid).
\end{proposition}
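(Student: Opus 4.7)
The plan is to build an explicit $k \times k$ grid framework on the hyperbolic paraboloid $z = xy$, exhibit a PSD equilibrium stress matrix of rank $n-4$ as a sum of one-dimensional ``line projectors,'' and then transport the construction to any doubly ruled quadric by projective invariance of the rank and signature of stress matrices. Fix distinct reals $u_1, \ldots, u_k$ and $v_1, \ldots, v_k$ with $k \geq 3$, and set $\p_{ij} := (u_i, v_j, u_i v_j)$, so the $k^2$ vertices lie on $z = xy$. The rulings are $L_i := \{\p_{ij} : j\}$ (the line $x = u_i$ on the surface) and $M_j := \{\p_{ij} : i\}$ (the line $y = v_j$). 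For edges, place a complete graph $K_k$ on the vertex set of every $L_i$ and every $M_j$; every edge then lies inside a single ruling, so $(G,\p)$ is ruled.

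For the stress, on each ruling $L$ of $k$ points parametrized by a real $t$ along the line, let $\Omega_L$ be the $n \times n$ symmetric matrix acting on the $L$-block as the orthogonal projection onto the orthogonal complement of $\operatorname{span}\{\mathbf{1}, \mathbf{t}\}$ and zero elsewhere. Then $\Omega_L$ is PSD of rank $k - 2$, supported on the $K_k$ subgraph of $L$, and its kernel on $L$ is $\operatorname{span}\{\mathbf{1}, \mathbf{t}\}$. Set
\[
\Omega := \sum_{i=1}^k \Omega_{L_i} + \sum_{j=1}^k \Omega_{M_j}.
\]
Then $\Omega$ is PSD and supported on $G$. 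Since any affine function on $\EE^3$ restricts to an affine function of the line parameter on each ruling, the four vectors $\mathbf{1}, x, y, z$ all lie in every $\ker \Omega_{L_i}$ and $\ker \Omega_{M_j}$, so $\Omega$ is an equilibrium stress matrix for $(G,\p)$.

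The crux is to show $\operatorname{rank}(\Omega) = n - 4$, equivalently that $\ker \Omega$ has dimension exactly $4$. Because $\Omega$ is a sum of PSDs, $x \in \ker \Omega$ iff $\Omega_L x = 0$ for every ruling $L$; this forces $x_{\p_{ij}} = \alpha_i + \beta_i v_j$ along each $L_i$ and $x_{\p_{ij}} = \gamma_j + \delta_j u_i$ along each $M_j$. Matching these two expressions at every vertex (for instance, subtracting the $L$-equations for $i = 1$ and $i = 2$ shows $\delta_j$ must be affine in $v_j$) yields $x_{\p_{ij}} = a + b u_i + c v_j + d u_i v_j$ for scalars $a, b, c, d$. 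This is precisely the $4$-dimensional span of $\{\mathbf{1}, x, y, z\}$ restricted to the vertices, so $\ker \Omega$ has dimension $4$ and $\operatorname{rank}(\Omega) = n - 4$. Proposition 5.9 of \cite{Gortler-affine-rigidity} then gives the parenthetical neighborhood affine rigidity. The main obstacle is this exact kernel computation: the inclusion $\operatorname{span}\{\mathbf{1}, x, y, z\} \subseteq \ker \Omega$ is automatic, but pinning down equality requires the bi-affine matching argument and the assumption $k \geq 3$ with distinct parameters.
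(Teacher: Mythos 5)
Your proof is correct, but it takes a different route from the paper's. The paper works with a more general line arrangement on $\QQQ$ (any connected, minimum-degree-$3$ bipartite incidence pattern of ruling lines in which two lines $\ell_1,\ell_2$ of one family meet every line of the other), builds the stress as a positive combination of the corank-$2$ collinear stresses, and then bounds the rank \emph{geometrically}: any framework in the kernel must keep each line's vertices collinear, and the incidence conditions force all such kernel frameworks into a $3$-dimensional affine span, which is equivalent to $\operatorname{rank}\Omega = n-4$. You instead fix the concrete $k\times k$ grid on $z=xy$, use the same line-projector stresses, and pin down the kernel \emph{algebraically} via the bi-affine matching computation, showing it is exactly $\operatorname{span}\{\mathbf 1,x,y,z\}$; you then transport to an arbitrary doubly ruled quadric by projective equivalence and the invariance of stress rank and signature (the result from \cite{coning} that the paper itself quotes in its remarks on Corollary \ref{cor:ssinv}). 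Your version is more explicit and self-contained on the model quadric; the paper's version is more flexible about which vertex and edge sets work and never leaves the target quadric. Two small points to tidy in your write-up: (i) when transporting, you should note that the grid parameters can be chosen so that no vertex is sent to infinity by the projective map (a generic choice suffices, since the preimage of the plane at infinity is a single plane), and that projective maps preserve lines and incidences so the image is again a grid of the required type; (ii) your matching argument as stated only needs $k\ge 2$, but $k\ge 3$ is what makes each line projector nonzero (rank $k-2\ge 1$) and hence makes $\Omega$ supported on actual edges --- worth saying where that hypothesis is really used.
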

\begin{proof}
The construction is as follows.  Start with a
collection of lines $\{\ell_1,\ldots, \ell_s,m_1,\ldots,m_t\}$
on $\QQQ$ so that: (1) the $\ell_i$ are in one ruling and then $m_j$
in the other; (2) the (necessarily bipartite)
intersection graph of the $\ell_i$ and
$m_j$ is connected and has minimum degree $3$; (3) $\ell_1$ and
$\ell_2$ intersect all the $m_j$.  Now we
construct a framework by putting vertices at the intersection
points of lines and all the edges between vertices
on the same line.  Call this framework $(G,\p)$.

Any framework of a complete graph with $3$ or more vertices, all
on a line,  supports a PSD equilibrium stress
matrix $\Omega$ with corank $2$.  Taking a positive linear
combination of these shows that $(G,\p)$ carries a PSD
stress matrix $\Omega$, that forces any framework
$(G,\q)$ in its kernel to have the vertices partitioned
into the same collinear subsets.

To finish up, we observe that $\ell_1$ and $\ell_2$
span at most a $3$-dimensional affine space.  Since
the $m_i$ intersect $\ell_1$ and $\ell_2$, they are
in the same space.  Each of the $\ell_i$, $i\ge 3$ intersect
at least $2$ of the $m_i$, establishing that any such kernel framework, $(G,\q)$, has at most $3$-dimensional span. Equivalently, $\Omega$ has rank $n-4$.
\end{proof}

Another possibility in $\EE^3$ is a framework on a
quadric with a single cone point, such as
an elliptic cone (rank $3$).
In this case, there can be at most one vertex with a full
dimensional neighborhood affine span.
Any other vertex can only have a one dimensional
neighborhood affine span.
With an appropriate edge set,
we can construct such a framework that is
neighborhood affine rigid.
But we can not construct such a framework to have a
equilibrium stress matrix of rank $n-d-1$
unless the entire
framework lies on three intersecting lines!

The last possibility is a framework contained entirely within two intersecting
planes (rank $2$).
We can construct such a framework that is
neighborhood affine rigid or even to have a
PSD equilibrium stress matrix of rank $n-d-1$.
A framework contained within two planes
can have at most two points in general affine position,
that have neighborhoods with full three dimensional affine
spans. (Such two points
must be on the line of intersection between the two planes.)

\section*{Acknowledgments}
RC is partially supported by NSF grant
DMS-1564493. SJG is partially supported by NSF grant  DMS-1564473.

\bibliographystyle{abbrvnat}

\begin{thebibliography}{19}
\providecommand{\natexlab}[1]{#1}
\providecommand{\url}[1]{\texttt{#1}}
\expandafter\ifx\csname urlstyle\endcsname\relax
\providecommand{\doi}[1]{doi: #1}\else
\providecommand{\doi}{doi: \begingroup \urlstyle{rm}\Url}\fi

\bibitem[Alfakih(2007)]{Alfakih-dim-rigidity}
A.~Y. Alfakih.
\newblock On dimensional rigidity of bar-and-joint frameworks.
\newblock \emph{Discrete Appl. Math.}, 155\penalty0 (10):\penalty0 1244--1253,
2007.
\newblock ISSN 0166-218X.
\newblock \doi{10.1016/j.dam.2006.11.011}.
\newblock URL \url{http://dx.doi.org/10.1016/j.dam.2006.11.011}.

\bibitem[Alfakih(2014)]{Alfakih-fields}
A.~Y. Alfakih.
\newblock Local, dimensional and universal rigidities: a unified {G}ram matrix
approach.
\newblock In \emph{Rigidity and symmetry}, volume~70 of \emph{Fields Inst.
Commun.}, pages 41--60. Springer, New York, 2014.
\newblock \doi{10.1007/978-1-4939-0781-6_3}.
\newblock URL \url{http://dx.doi.org/10.1007/978-1-4939-0781-6_3}.

\bibitem[Alfakih(2015)]{spec}
A.~Y. Alfakih.
\newblock Universal rigidity of bar frameworks via the geometry of
spectrahedra.
\newblock Preprint, arXiv:1504.00578, 2015.
\newblock URL \url{https://arxiv.org/abs/1504.00578}.

\bibitem[Alfakih and Nguyen(2013)]{alfakih2013affine}
A.~Y. Alfakih and V.-H. Nguyen.
\newblock On affine motions and universal rigidity of tensegrity frameworks.
\newblock \emph{Linear Algebra Appl.}, 439\penalty0 (10):\penalty0 3134--3147,
2013.
\newblock ISSN 0024-3795.
\newblock \doi{10.1016/j.laa.2013.08.016}.
\newblock URL \url{http://dx.doi.org/10.1016/j.laa.2013.08.016}.

\bibitem[Connelly(1982)]{Connelly-energy}
R.~Connelly.
\newblock Rigidity and energy.
\newblock \emph{Invent. Math.}, 66\penalty0 (1):\penalty0 11--33, 1982.
\newblock ISSN 0020-9910.
\newblock \doi{10.1007/BF01404753}.
\newblock URL \url{http://dx.doi.org/10.1007/BF01404753}.

\bibitem[Connelly(1993)]{Connelly-rigidity}
R.~Connelly.
\newblock Rigidity.
\newblock In \emph{Handbook of convex geometry, {V}ol.\ {A}, {B}}, pages
223--271. North-Holland, Amsterdam, 1993.

\bibitem[Connelly(2005)]{Connelly-global}
R.~Connelly.
\newblock Generic global rigidity.
\newblock \emph{Discrete Comput. Geom.}, 33\penalty0 (4):\penalty0 549--563,
2005.
\newblock ISSN 0179-5376.
\newblock \doi{10.1007/s00454-004-1124-4}.
\newblock URL \url{http://dx.doi.org/10.1007/s00454-004-1124-4}.

\bibitem[Connelly(2013)]{Shaping-space}
R.~Connelly.
\newblock Tensegrities and global rigidity.
\newblock In \emph{Shaping space}, pages 267--278. Springer, New York, 2013.
\newblock \doi{10.1007/978-0-387-92714-5_21}.
\newblock URL \url{http://dx.doi.org/10.1007/978-0-387-92714-5_21}.

\bibitem[Connelly and Gortler(2015{\natexlab{a}})]{Connelly-Gortler}
R.~Connelly and S.~J. Gortler.
\newblock Iterative {U}niversal {R}igidity.
\newblock \emph{Discrete Comput. Geom.}, 53\penalty0 (4):\penalty0 847--877,
2015{\natexlab{a}}.
\newblock ISSN 0179-5376.
\newblock \doi{10.1007/s00454-015-9670-5}.
\newblock URL \url{http://dx.doi.org/10.1007/s00454-015-9670-5}.

\bibitem[Connelly and Gortler(2015{\natexlab{b}})]{connelly2015universal}
R.~Connelly and S.~J. Gortler.
\newblock Universal rigidity of complete bipartite graphs.
\newblock Preprint, arXiv:1502.02278, 2015{\natexlab{b}}.
\newblock URL \url{https://arxiv.org/abs/1502.02278}.

\bibitem[Connelly and Whiteley(2010)]{coning}
R.~Connelly and W.~J. Whiteley.
\newblock Global rigidity: the effect of coning.
\newblock \emph{Discrete Comput. Geom.}, 43\penalty0 (4):\penalty0 717--735,
2010.
\newblock ISSN 0179-5376.
\newblock \doi{10.1007/s00454-009-9220-0}.
\newblock URL \url{http://dx.doi.org/10.1007/s00454-009-9220-0}.

\bibitem[Godsil(2001)]{god}
C.~Godsil.
\newblock The {C}olin de {V}erdière number.
\newblock Online notes, 2001.
\newblock URL \url{http://quoll.uwaterloo.ca/mine/Notes/cdv.pdf}.

\bibitem[Gortler and Thurston(2014)]{Gortler-thurston2}
S.~J. Gortler and D.~P. Thurston.
\newblock Characterizing the universal rigidity of generic frameworks.
\newblock \emph{Discrete Comput. Geom.}, 51\penalty0 (4):\penalty0 1017--1036,
2014.
\newblock ISSN 0179-5376.
\newblock \doi{10.1007/s00454-014-9590-9}.
\newblock URL \url{http://dx.doi.org/10.1007/s00454-014-9590-9}.

\bibitem[Gortler et~al.(2013)Gortler, Gotsman, Liu, and
Thurston]{Gortler-affine-rigidity}
S.~J. Gortler, C.~Gotsman, L.~Liu, and D.~Thurston.
\newblock On affine rigidity.
\newblock \emph{JoCG}, 4\penalty0 (1):\penalty0 160--181, 2013.
\newblock URL \url{http://jocg.org/index.php/jocg/article/view/49}.

\bibitem[Harris(2013)]{harris2013algebraic}
J.~Harris.
\newblock \emph{Algebraic geometry: a first course}, volume 133.
\newblock Springer Science \& Business Media, 2013.

\bibitem[Laurent and Varvitsiotis(2014)]{laurent2014positive}
M.~Laurent and A.~Varvitsiotis.
\newblock Positive semidefinite matrix completion, universal rigidity and the
strong {A}rnold property.
\newblock \emph{Linear Algebra Appl.}, 452:\penalty0 292--317, 2014.
\newblock ISSN 0024-3795.
\newblock \doi{10.1016/j.laa.2014.03.015}.
\newblock URL \url{http://dx.doi.org/10.1016/j.laa.2014.03.015}.

\bibitem[Lov\'asz(2001)]{lovasz2001steinitz}
L.~Lov\'asz.
\newblock Steinitz representations of polyhedra and the {C}olin de {V}erdi\`ere
number.
\newblock \emph{J. Combin. Theory Ser. B}, 82\penalty0 (2):\penalty0 223--236,
2001.
\newblock ISSN 0095-8956.
\newblock \doi{10.1006/jctb.2000.2027}.
\newblock URL \url{http://dx.doi.org/10.1006/jctb.2000.2027}.

\bibitem[van~der Holst(2002)]{van2002graphs}
H.~van~der Holst.
\newblock Graphs with magnetic {S}chr\"odinger operators of low corank.
\newblock \emph{J. Combin. Theory Ser. B}, 84\penalty0 (2):\penalty0 311--339,
2002.
\newblock ISSN 0095-8956.
\newblock \doi{10.1006/jctb.2001.2087}.
\newblock URL \url{http://dx.doi.org/10.1006/jctb.2001.2087}.

\bibitem[van~der Holst et~al.(1999)van~der Holst, Lov\'asz, and
Schrijver]{colin}
H.~van~der Holst, L.~Lov\'asz, and A.~Schrijver.
\newblock The {C}olin de {V}erdi\`ere graph parameter.
\newblock In \emph{Graph theory and combinatorial biology ({B}alatonlelle,
1996)}, volume~7 of \emph{Bolyai Soc. Math. Stud.}, pages 29--85. J\'anos
Bolyai Math. Soc., Budapest, 1999.

\end{thebibliography}

\end{document}